\newtheorem{theorem}{Theorem}[section]
\newtheorem{lemma}[theorem]{Lemma}
\newtheorem{corollary}[theorem]{Corollary}
\theoremstyle{definition}
\newtheorem{definition}[theorem]{Definition}
\newtheorem{example}[theorem]{Example}
\newtheorem{remark}[theorem]{Remark}
\newcommand{\calM}{\mathcal{M}}
\newcommand{\calP}{\mathcal{P}}
\newcommand{\calQ}{\mathcal{Q}}
\newcommand{\Moduli}{\overline{\mathcal{M}}}
\newcommand{\UF}{\overline{\mathcal{U}}}
\newcommand{\decorate}{\frac{\psi_{\bullet_j}^{\alpha_j}}{(-\psi_{\bullet_j}-\psi_{\star_j})^{1-\delta_j}}}
\newcommand{\decorateone}{\frac{\psi_{\bullet_1}^{\alpha_1}}{(-\psi_{\bullet_1}-\psi_{\star_1})^{1-\delta_1}}}
\newcommand{\decorateoneb}{\frac{\psi_{\bullet_1}^{\alpha_1}}{(-\psi_{\bullet_1}-\psi_{\star_1})}}
\title[]{Intersections of $\omega$ classes in $\Moduli_{g,n}$}
\author{Vance Blankers}
\address{Department of Mathematics, Colorado State University, Fort Collins, Colorado 80523-1874}
\email{\href{mailto:blankers@mail.colostate.edu }{blankers@mail.colostate.edu }}
\thanks{V.B. was supported by NSF FRG grant 1159964 (PI: Renzo Cavalieri)}
\author{Renzo Cavalieri}
\address{Department of Mathematics, Colorado State University, Fort Collins, Colorado 80523-1874}
\email{\href{mailto:renzo@math.colostate.edu}{renzo@math.colostate.edu}}
\thanks{R.C. acknowledges support from Simons Foundation Collaboration Grant 420720}
\subjclass[2010]{14N35}
\begin{document}

\begin{abstract} 
We provide a graph formula which describes an arbitrary monomial in $\omega$  classes (also referred to as {\it stable} $\psi$ classes) in terms of a simple family of dual graphs ({\it pinwheel graphs}) with edges decorated by rational functions in $\psi$ classes. We deduce some numerical consequences and in particular a  combinatorial formula expressing top intersections of $\kappa$ classes on $\Moduli_g$ in terms of top intersections of $\psi$ classes. 
\end{abstract}
\maketitle

\setcounter{tocdepth}{1}





\section*{Introduction}

In \cite{m:taegotmsoc}, Mumford initiated the study of {\it tautological} intersection theory of the moduli spaces of curves. Realizing that the full Chow ring of $\Moduli_{g,n}$ was largely out of reach, he sought to identify a set of classes that mediates between the following tension: on the one hand, giving a theory with manageable algebraic structure, on the other capturing a large number of Chow classes that are geometrically defined.  Remarkably, the absolute minimal requirements, i.e. to be subalgebras which  contain fundamental classes, and to be closed under push-forwards via the natural gluing and forgetful morphisms, already produce a quite robust theory: many interesting classes such as the Chern classes of the Hodge bundle, Hurwitz, Gromov-Witten and Brill-Noether classes are { tautological} (\cite{fp:rmatc}). 

In \cite{gp:tg}, Graber and Pandharipande exhibit a set of additive generators for the tautological ring, parametrized by dual graphs with vertices decorated by monomials in $\kappa$ classes  and flags decorated by powers of $\psi$ classes. Following this result, a natural direction of investigation is to describe geometrically defined tautological classes in terms of these standard generators.
There are two technical obstructions to this plan, first that the ranks of the graded parts of the tautological rings for positive dimensional classes grow rather fast as $g$ or $n$ get larger, and second there  are many relations among the standard generators and generally no canonical or especially meaningful choice for a basis of the tautological ring. For this reason until recently the study of the structure of families of tautological cycles was mostly restricted to top intersections (e.g. Gromov-Witten invariants, Hurwitz numbers); these are cycles of dimension $0$, hence proportional to the class of a point. When positive dimensional tautological classes come in families, it is desirable to describe them in a way that highlights such structure. A {\it graph formula}, i.e. a formula that describes a tautological class as a sum over graphs with local (vertex, edge, flag) decorations is a combinatorially pleasing and effective way to achieve this goal.
In \cite{proofofpixton}, the authors prove a remarkable graph formula, conjectured by Pixton, for double ramification loci. In \cite{cat}, the second author and Tarasca give graph formulas for the classes of genus $2$ curves with marked Weierstrass points. 
 
The tautological $\psi$ classes (Definition \ref{def-psi}) play a central role in this theory. They arise when performing non-transversal intersections of boundary strata, which causes them to appear as edge decorations for Pixton's graph formulas. Tautological $\psi$ classes are almost stable under pull-back: Lemma \ref{lem-comp} describes the rational tail correction between the pull-back of a $\psi$ class from $\Moduli_{g,n}$ and the  corresponding $\psi$ class on $\Moduli_{g, n+1}$.
When imposing geometric conditions at marked points (for example in \cite{cat} the property of being a Weierstrass point), it is often more convenient to work with the stable version of $\psi$ classes, which are called $\omega$ classes (Definition \ref{def-omega}). 

It is simple to describe the difference between a $\psi_i$ class and the corresponding $\omega_i$ class: it consists of all divisors where the $i$-th mark is contained in a rational tail. Tracking down the corrections when intersecting a certain number of these classes quickly becomes unwieldy, because one is performing both transverse and non transverse intersection of boundary divisors whose irreducible components grow exponentially with the number of marked points.

The main result of this article is Theorem \ref{omega}, which gives a graph formula for an arbitrary monomial in $\omega$ classes in terms of a family of graphs of rational tails, with edges decorated by simple rational functions in $\psi$ classes. The simplicity of the formula, which certainly surpassed our initial expectations, witnesses the high degree of symmetry and combinatorial structure present in the problem. We hope and expect that formula \eqref{omegafor} will be useful for further development and manipulation of other graph formulas in the tautological ring, thus giving a positive contribution to the successful development of a {\it calculus for the moduli space of curves}, as advocated by Pandharipande in his 2015 AMS Algebraic Geometry Program plenary lecture (\cite{p:cmsoc}).
\vspace{0.2cm}

While we expect the readers to have had some exposure to the moduli space of curves, we are hoping to be able to communicate and attract the interest of combinatorially inclined algebraic geometers whom might not be experts in $\Moduli_{g,n}$. For this reason, Section \ref{sec-prel} provides a brief introduction to  basic facts and techniques about tautological classes, together with skeletal proofs for some basic computations on $\psi$ and $\omega$ classes which are well known to the experts, but hard to explicitly find in the literature.
The reader interested in more comprehensive references may look at \cite{hm:moc} for a general introduction to the theory, at \cite{y:inom} for a working reference to algorithms for intersecting boundary strata, and  at the unpublished notes \cite{k:pc} for psi classes.
Section \ref{sec-mt} states and proves Theorem \ref{omega}: the proof is a combination of induction and manipulation of rational tail boundary strata to recognize genus zero $\psi$ classes contributing to the formula.
In Section \ref{sec-numint}, we specialize to the case of top intersections of $\omega$ classes: projection formula gives a direct and simple relation with certain top intersections of $\kappa$ classes on $\Moduli_g$, and thus \eqref{omegafor} produces a simple and explicit formula that relates top intersections of $\kappa$ and $\psi$ classes (though we remark that the fact that the two theories are equivalent was well known).

\subsection*{Acknowledgements} The authors would like to thank Emily Clader, Dusty Ross and Mark Shoemaker  for interesting conversations related to this work. The second author wishes to thank the Fields Institute of Mathematics: this project began during his visit to the special program in Combinatorial Algebraic Geometry in Fall 2016. In particular the collaboration with Nicola Tarasca was instrumental both for gaining experience and manuality with graph formulas, and realizing the need for an efficient relation between $\psi$ classes and their stable version.

\section{Preliminaries} \label{sec-prel}

Given two non-negative integers $g,n$ satisfying $2g-2+n >0$, we denote by $\Moduli_{g,n}$ the fine moduli space for families of Deligne-Mumford stable curves of genus $g$ with $n$ marked points. The space $\Moduli_{g,n}$ is a smooth and projective DM stack of dimension $3g-3+n$, and it is stratified by locally closed substacks parameterizing topologically equivalent pointed curves; strata are naturally indexed by {\bf dual graphs}, constructed as follows. Given a pointed curve $(C, p_1, \ldots, p_n)$,  consider the normalization $\nu:{C}'\to C$ of the curve $C$; attach a flag to each point $\nu^{-1}(p_i)$, labeled by the corresponding mark; for each node $x\in C$, connect by an  edge the two points in $\nu^{-1}(x)$; then contract each irreducible component of the normalization to a vertex, and label it by the genus of the component.
An example of the graph resulting from this construction is illustrated in Figure \ref{fig-dual}. 

Since we are interested in intersecting closed cycles, we denote by $\Delta_\Gamma$ the closure of the stratum identified by a dual graph $\Gamma$. We also adopt the common abuse of calling {\bf boundary stratum} the cycle obtained as the closure of a stratum.  The codimension of $\Delta_\Gamma$ in $\Moduli_{g,n}$ is equal to the number of edges of $\Gamma$; further, we have that $\Delta_{\Gamma_1} \subseteq \Delta_{\Gamma_2}$ if and only if $\Gamma_2$ is obtained from $\Gamma_1$ by a sequence of edge contractions. An {\bf edge contraction} is an operation on dual graphs that consists in contracting an edge and  either adding the genera of the two distinct vertices that get identified in the process, or, if the edge is a loop, adding one to the genus of the vertex to which it is attached. 
 \begin{figure}[tb]
\begin{tikzpicture}

\draw[very thick] (0,0) ellipse (4cm and 1cm);

\draw[very thick] (1,0) .. controls (1.5,-0.5) and (2.5,-0.5) .. (3,0);
\draw[very thick] (1.2,-0.18) .. controls (1.5,0.3) and (2.5,0.3) .. (2.8,-0.18);

\draw[very thick] (-1,0) .. controls (-1.5,-0.5) and (-2.5,-0.5) .. (-3,0);
\draw[very thick] (-1.2,-0.18) .. controls (-1.5,0.3) and (-2.5,0.3) .. (-2.8,-0.18);

\fill (0,0) circle (0.10);
\node at (0.3,0.2) {$5$};

\draw[very thick]  (2.5,1.8) circle (1);
\fill (2,1.5) circle (0.10);
\node at (2.3,1.7) {$3$};
\fill (3,2) circle (0.10);
\node at (2.7,2.2) {$4$};

\draw[very thick]  (-2.5,1.8) circle (1);
\fill (-2,1.5) circle (0.10);
\node at (-2.3,1.7) {$2$};
\fill (-3,2) circle (0.10);
\node at (-2.7,2.2) {$1$};


\draw[very thick]  (8,0) circle (0.30);
\node at (8,0) {$2$};
\draw[very thick] (8.2,0.2) -- (10,2);
\draw[very thick] (7.8,0.2) -- (6,2);
\fill (9.2,1.2) circle (0.15);
\fill (6.8,1.2) circle (0.15);

\draw[very thick] (9.2,1.2) -- (8.4,2);
\draw[very thick] (6.8,1.2) -- (7.6,2);

\draw[very thick] (8,-0.3) -- (8,- 1);

\node at (10.1,2.2) {$4$};
\node at (8.3,2.2) {$3$};
\node at (5.9,2.2) {$1$};
\node at (7.7,2.2) {$2$};
\node at (8, -1.2) {$5$};
\end{tikzpicture}
\caption{On the left-hand side, a topological type of genus $2$ curves with  $5$ marked poinsts; on the right-hand side the corresponding dual graph. Vertices corresponding to genus zero components are represented by filled in dots.}
\label{fig-dual}
\end{figure}
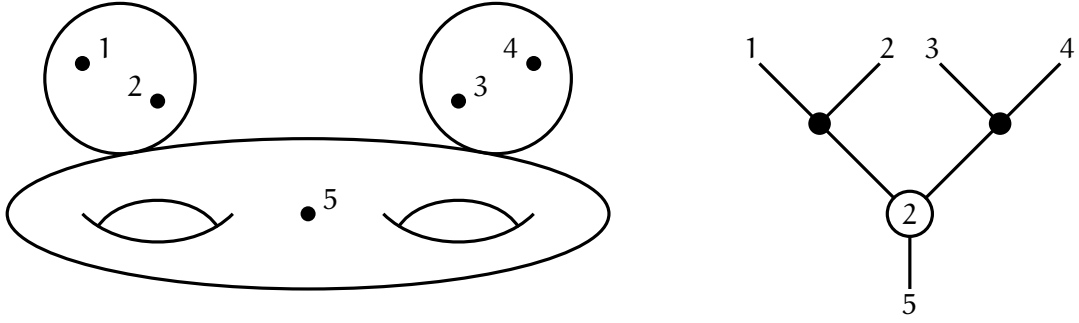

Strata arise as images of tautological {\bf gluing morphisms}. Given a dual graph $\Gamma$, we define
\begin{equation}
gl_\Gamma: \prod_{v\in V(\Gamma)} \Moduli_{g(v), val(v)} \to \Moduli_{g,n}
\end{equation}  
to be the morphism that glues marked points corresponding to pairs of flags that form an edge of the dual graph. The morphism $gl(\Gamma)$ is finite  onto $\Delta_\Gamma$ of degree $|Aut(\Gamma)|$. 

Given a mark $i \in [n+1]$, there is a {\bf forgetful morphism}
\begin{equation}
\pi_i: \Moduli_{g,n+1} \to \Moduli_{g, [n+1]\smallsetminus \{i\}} \cong \Moduli_{g, n}
\end{equation}
which assigns to an $(n+1)$-pointed curve $(C,p_1, \ldots, p_{n+1})$ the $n$-pointed curve obtained by forgetting the $i$-th marked point and  contracting any rational component of $C$ which has less than three special points (marks or nodes). The morphism $\pi_i$ functions as a universal family  for $\Moduli_{g,n}$, and so in particular the universal curve  $\UF_{g,n} \to \Moduli_{g,n}$ may be identified with $\Moduli_{g,n+1}$.

The {\bf $i$-th tautological section} 
\begin{equation}
\sigma_i: \Moduli_{g,n} \to \UF_{g,n} \cong \Moduli_{g,n+1}
\end{equation}
assigns to an $n$-pointed curve $(C,p_1, \ldots, p_n)$ the point $p_i$ in the fiber over $(C,p_1, \ldots, p_n)$ in the universal curve. Such points corresponds to the $(n+1)-$th pointed curve obtained by attaching a rational component to the point $p_i\in C$ and placing arbitrarily the marks $p_i$ and $p_{n+1}$ on the new rational component. Via the identification of the universal map with a forgetful morphism, the section $\sigma_i$ may be viewed as a gluing morphism and its image as a boundary stratum, as illustrated in the following diagram:
\begin{equation}
\xymatrix{
\UF_{g,n} \ar[rr]^{\cong} & & \Moduli_{g, n+1}\\
Im(\sigma_i) \ar@{}[u]|-*[@]{\subseteq}& & D_{i,n+1}\ar@{}[u]|-*[@]{\subseteq}\\
\Moduli_{g,n} \ar[rr]^{\hspace{-1cm} \cong}  \ar[u]^{\sigma_i}&  & \Moduli_{g, [n]\smallsetminus \{i\} \cup \{\bullet\}} \times \Moduli_{0, \{\star, i, n+1\}} \ar[u]^{gl_{D}}
}
\end{equation}

We consider all $\Moduli_{g,n}$'s (for all possible values of $g,n$) as a system of moduli spaces connected by the tautological morphisms, and define the {\bf tautological ring} $\mathcal{R} = \{R^\ast(\Moduli_{g,n})\}_{g,n}$ of this system to be the smallest system of subrings of the Chow ring of  each $\Moduli_{g,n}$, containing all fundamental classes $[\Moduli_{g,n}]$ and closed under push-forwards and pull-backs via the tautological morphisms. Clearly boundary strata are elements of the tautological ring. When studying non-transverse intersections of boundary strata (which are tautological by definition), a new family of interesting tautological classes are introduced, which we now describe.

\begin{definition}\label{def-psi}
For any choice of mark $i \in [n]$, the class $\psi_i \in R^1(\Moduli_{g,n})$ is defined in one of the following equivalent ways:
\begin{enumerate}
	\item $\psi_i = c_1(\mathbb{L}_i)$, where the $i$-th cotangent line bundle $\mathbb{L}_i$ is defined by a natural identification of its fiber over a point $(C, p_1, \ldots, p_n)$ with the cotangent space $T^\ast_{p_i}(C)$.
	\item $\psi_i = \sigma_i^\ast(\omega_{\pi})$, where $\omega_{\pi}$ denotes the relative dualizing sheaf of the universal family $\pi: \UF_{g,n} \to \Moduli_{g,n}$.
	\item $\psi_i = - \pi_\ast(c_1(N_{\sigma_i}))$, where we denote by $N_{\sigma_i}$ the normal bundle to the image of the $i$-th tautological section, and by $\pi$ the universal family as above. 
\end{enumerate}
\end{definition}

For $g \geq 2$, the class $\psi_i$ is not equivalent to a linear combination of boundary strata. Yet the following comparison lemma tells us that most of the geometric information of these classes is contained in $\psi_1 \in R^1(\Moduli_{g,1})$.

\begin{lemma}\cite{k:pc} \label{lem-comp}
Consider the forgetful morphism $\pi_{n+1}: \Moduli_{g,n+1} \to \Moduli_{g,n}$, and let the context determine whether $\psi_i$ denotes the class on $\Moduli_{g,n}$ or $\Moduli_{g,n+1}$. For $i\in [n]$, we have:
\begin{equation} \label{pbr}
\psi_i = \pi_{n+1}^\ast (\psi_i) + D_{i,n+1},
\end{equation}
where $D_{i,n+1}$ denotes the image of the section $\sigma_i$, or equivalently the boundary divisor generically parameterizing nodal curves where one component  is rational and it hosts the $i$-th and $(n+1)$-th marks.
\end{lemma}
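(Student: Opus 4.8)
The plan is to upgrade the asserted equality of divisor classes to an isomorphism of line bundles on $\Moduli_{g,n+1}$, namely
\[
\mathbb{L}_i \;\cong\; \pi_{n+1}^\ast \mathbb{L}_i \otimes \calO(D_{i,n+1}),
\]
and then take first Chern classes, invoking Definition \ref{def-psi}(1). Throughout I identify $\pi_{n+1}$ with the universal curve $\pi\colon \UF_{g,n}\to\Moduli_{g,n}$, so that a point of $\Moduli_{g,n+1}$ is a pointed curve $(C,p_1,\dots,p_{n+1})$ equipped with its stabilization morphism $c\colon C\to C'$, where $(C',p_1,\dots,p_n)=\pi_{n+1}(C,p_1,\dots,p_{n+1})$. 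Off $D_{i,n+1}$ the map $c$ is an isomorphism near $p_i$; along $D_{i,n+1}$ it contracts the rational bubble carrying $p_i$ and $p_{n+1}$ to the attaching node $q\in C'$.

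First I would produce a natural morphism $\phi\colon \pi_{n+1}^\ast\mathbb{L}_i\to\mathbb{L}_i$. The fiber of $\pi_{n+1}^\ast\mathbb{L}_i$ over $(C,\dots)$ is $T^\ast_{p_i}C'$ and that of $\mathbb{L}_i$ is $T^\ast_{p_i}C$, so the codifferential $c^\ast\colon T^\ast_{c(p_i)}C'\to T^\ast_{p_i}C$ supplies $\phi$. By construction $\phi$ is an isomorphism precisely away from $D_{i,n+1}$, hence the line bundle $\mathbb{L}_i\otimes(\pi_{n+1}^\ast\mathbb{L}_i)^\vee$ is trivial on the complement of $D_{i,n+1}$. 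Since $\Moduli_{g,n+1}$ is smooth and $D_{i,n+1}$ is an irreducible divisor, the localization sequence for Picard groups gives
\[
\psi_i - \pi_{n+1}^\ast\psi_i = a\,[D_{i,n+1}] \qquad\text{for some } a\in\Z,
\]
and it remains to show $a=1$, i.e.\ that $\phi$ vanishes to order exactly one along $D_{i,n+1}$.

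To compute $a$ I would work in local coordinates near a generic point of $D_{i,n+1}$, where the divisor is smooth and $C'$ is smooth at $p_i=q$. Model $C'$ near $q$ as a disk with coordinate $z$ vanishing at $q$, let $t$ be a transverse parameter cutting out $D_{i,n+1}=\{t=0\}$, and arrange $p_{n+1}$ to approach $p_i$ along $z=t$. Separating the two colliding marks replaces the point $z=0$ by a rational bubble carrying a coordinate $w$ related to the base by $z=t\,w$, so that the codifferential acts by $c^\ast(dz)=t\,dw$ at $p_i$. Thus $\phi$ has a simple zero along $\{t=0\}$, giving $a=1$ and hence \eqref{pbr}.

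The genuine obstacle is this last step: justifying the local normal form $z=t\,w$ requires an honest description of $\Moduli_{g,n+1}$ near the section $\sigma_i$ as the modification that separates the colliding marks $p_i$ and $p_{n+1}$, and with it the behaviour of the cotangent line along the exceptional bubble. The same transversality content can be packaged differently — for instance through the self-intersection identity $\sigma_i^\ast\calO(D_{i,n+1})\cong\mathbb{L}_i^\vee$ underlying Definition \ref{def-psi}(3) — but in every approach it is precisely this simple-vanishing computation that forces the coefficient of $D_{i,n+1}$ to equal $1$.
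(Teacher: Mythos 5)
The paper does not prove this lemma at all: it is quoted directly from the unpublished notes \cite{k:pc}, so there is no internal argument to compare yours against. On its own terms your proof is correct and is the standard one. The codifferential of the stabilization map gives a morphism $\phi\colon \pi_{n+1}^\ast\mathbb{L}_i\to\mathbb{L}_i$, i.e.\ a section of $\mathbb{L}_i\otimes(\pi_{n+1}^\ast\mathbb{L}_i)^\vee$ that is nowhere vanishing off $D_{i,n+1}$, and your local model $z=tw$ correctly exhibits simple vanishing along $D_{i,n+1}$, so the first Chern class of that bundle is $[D_{i,n+1}]$ on the nose. The one step you flag as carrying the real content is indeed the crux, but it is a citable fact rather than a gap: by Knudsen's stabilization construction, the universal curve $\UF_{g,n+1}$ is obtained from the pulled-back family $\pi_{n+1}^\ast\,\UF_{g,n}$ by blowing up the locus where the diagonal section meets $\sigma_i$, and the chart $z=tw$ is exactly the blow-up chart. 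Within the framework this paper does set up, the coefficient $a$ can alternatively be pinned down without coordinates by restricting $\psi_i-\pi_{n+1}^\ast\psi_i=a\,D_{i,n+1}$ to $D_{i,n+1}\cong\Moduli_{g,n}$ and invoking Lemma \ref{rest}, the vanishing of $\psi$ classes on the $\Moduli_{0,3}$ bubble factor, and the self-intersection formula \eqref{nont}; as you yourself observe, this merely relocates the same transversality computation, which the paper has already absorbed into the proof of \eqref{nont} and into Definition \ref{def-psi}(3).
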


Equation \eqref{pbr} leads to some combinatorially appealing representations of $\psi$ classes in genus zero as sums of boundary divisors. For a  two part partition $A\cup B = [n]$ of the set of indices with  $|A|, |B| \geq 2$, we  denote by $D(A|B)$ the (class of the) boundary divisor generically parameterizing  nodal curves where the marks in the subset $A$ are in one component, and those in $B$ in the other; alternatively, the divisor isomorphic to the closure of the image of $gl_D:\Moduli_{0,A\cup\{\bullet\}}\times\Moduli_{0,B\cup\{\star\}}$ in $\Moduli_{0,n}$.

\begin{lemma}
\label{bes}
For $i,j,k$ distinct elements of the set of marks, the class $\psi_i \in R^1(\Moduli_{0,n})$ may be represented by the following expression:
\begin{equation} \label{besf}
\psi_i = \sum_{\tiny{\begin{array}{c} A\ni j,k \\ B \ni i \end{array}}} D(A|B)
\end{equation}
\end{lemma}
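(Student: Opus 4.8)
The plan is to induct on the number of marked points $n$, peeling off one point at a time via the comparison Lemma \ref{lem-comp}, and combining it with the standard pullback rule for boundary divisors along a forgetful morphism, namely $\pi_{n+1}^\ast D(A|B) = D(A\cup\{n+1\}\mid B) + D(A\mid B\cup\{n+1\})$. The three distinguished marks $i,j,k$ stay fixed throughout the induction (they all lie in $[n]$ since they are distinct and the new mark $n+1$ is fresh), so the hypothesis on $\Moduli_{0,n}$ applies verbatim with the same $i,j,k$.

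For the base case I would take $n=3$, where $\Moduli_{0,3}$ is a point and $\psi_i=0$. On the other side the sum in \eqref{besf} is empty: any partition with $A\supseteq\{j,k\}$ and $i\in B$ forces $|B|=1$, which violates the stability constraint $|A|,|B|\ge 2$. Hence both sides vanish.

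For the inductive step, assume \eqref{besf} holds on $\Moduli_{0,n}$ and apply \eqref{pbr} to the forgetful morphism $\pi_{n+1}$, obtaining $\psi_i = \pi_{n+1}^\ast(\psi_i) + D_{i,n+1}$. Pulling back the inductive formula term by term via the divisor rule above, each partition $(A_0,B)$ of $[n]$ in the old sum produces two boundary divisors on $\Moduli_{0,n+1}$, according to whether the new mark $n+1$ joins the $A$-side or the $B$-side. The core of the argument is then a bookkeeping bijection showing that these pulled-back terms, together with the single extra divisor $D_{i,n+1} = D\big([n]\setminus\{i\}\mid\{i,n+1\}\big)$, reassemble exactly into $\sum_{A\ni j,k,\ B\ni i} D(A|B)$ over partitions of $[n+1]$. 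Concretely, the target partitions of $[n+1]$ fall into three groups: those with $n+1\in A$, matched by the $D(A_0\cup\{n+1\}\mid B)$ terms; those with $n+1\in B$ and $|B|\ge 3$, matched by the $D(A_0\mid B\cup\{n+1\})$ terms (here $B = B_0\cup\{n+1\}$ with $|B_0|\ge 2$); and the lone partition $B=\{i,n+1\}$, supplied precisely by $D_{i,n+1}$.

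The main obstacle is this final bookkeeping at the edge of the index set: I must verify that the stability constraints $|A|,|B|\ge 2$ match up correctly on both sides of the forgetful map, and in particular that the single partition $B=\{i,n+1\}$ — which the pullback necessarily misses, since it would correspond to the illegal case $|B_0|=1$ on $\Moduli_{0,n}$ — is accounted for by, and only by, the correction term $D_{i,n+1}$. I would also confirm that the pullback formula for $D(A|B)$ carries no hidden correction terms in this genus-zero setting (divisors where $n+1$ bubbles off are not in the preimage, as forgetting $n+1$ contracts such a tail). Once this correspondence is checked the induction closes, giving \eqref{besf} on $\Moduli_{0,n+1}$.
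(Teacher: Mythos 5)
Your proposal is correct and follows essentially the same route as the paper's proof: induction on $n$ starting from the base case $\Moduli_{0,3}$ (where $\psi_i=0$) and iterated application of the comparison formula \eqref{pbr}, with the correction divisor $D_{i,n+1}$ accounting for the one partition $B=\{i,n+1\}$ that the pullback misses. The paper states this in one line; your version simply supplies the bookkeeping details, all of which check out.
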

\begin{proof}
This result follows from iterated applications of \eqref{pbr}. The base case is $\Moduli_{0,\{i,j,k\}}\cong \Moduli_{0,3}$, where all $\psi$ classes are $0$  for dimension reasons.
\end{proof}
We note that such an expression is neither unique nor canonical, as it depends on the choice of the auxiliary marks $j,k$. As a corollary of Lemma \ref{bes} we have a canonical boundary expression for a sum of two $\psi$ classes. We give a derivation of this easy result since we could not find a reference in the literature.

\begin{lemma}\label{psisums}
Let $n\geq 3$; for any distinct $i,j\in [n]$ the following idenitity holds in the tautological ring of  $\Moduli_{0,n}$:
\begin{align}
\psi_i + \psi_j = \sum_{\tiny{\begin{array}{c} A\ni j \\ B \ni  i \end{array}}} D(A|B).
\end{align}
\end{lemma}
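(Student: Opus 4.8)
The plan is to reduce the statement to Lemma \ref{bes} by exploiting the freedom in the choice of the two auxiliary marks appearing in that expansion. Since $n \geq 3$, I may fix a third mark $k \in [n] \smallsetminus \{i,j\}$. The guiding idea is to represent $\psi_i$ using the auxiliary pair $\{j,k\}$ and to represent $\psi_j$ using the auxiliary pair $\{i,k\}$, chosen precisely so that the two resulting boundary expansions become complementary.

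Concretely, I would first apply Lemma \ref{bes} twice, once for each class, with these two choices of auxiliary marks:
\begin{equation*}
\psi_i = \sum_{\substack{A \ni j,k \\ B \ni i}} D(A|B), \qquad \psi_j = \sum_{\substack{A \ni i,k \\ B \ni j}} D(A|B).
\end{equation*}
Every term on the right separates $i$ from $j$: in the first sum the mark $i$ is separated from both $j$ and $k$ (so $k$ lies on the same side as $j$), while in the second the mark $j$ is separated from both $i$ and $k$ (so $k$ lies on the same side as $i$).

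The heart of the argument is then a combinatorial bookkeeping step. Each boundary divisor $D(A|B)$ appearing in the target sum $\sum_{\substack{A \ni j \\ B \ni i}} D(A|B)$ separates $i$ and $j$, and since $k \neq i,j$ the mark $k$ must sit either on the side of $i$ or on the side of $j$. The divisors in which $k$ is grouped with $j$ are exactly those indexed by the expansion of $\psi_i$, and the divisors in which $k$ is grouped with $i$ are exactly those indexed by $\psi_j$. These two families are disjoint and together exhaust all partitions separating $i$ and $j$, so adding the two displayed expressions recovers precisely the claimed right-hand side.

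The one point requiring a little care — and the closest thing to an obstacle — is verifying that the stability constraints $|A|, |B| \geq 2$ are consistent across the two conventions, so that no spurious terms are introduced and none are lost. This is in fact automatic: in the $\psi_i$ expansion the side carrying $\{j,k\}$ already contains two marks, and the only surviving requirement $|B| \geq 2$ is exactly the condition for the matching term to appear in the target sum; the symmetric statement handles $\psi_j$. Finally, since $D(A|B) = D(B|A)$ as an unordered partition, the relabeling needed to match the $\psi_j$ terms to the convention $A \ni j$, $B \ni i$ introduces no double counting, and the identity follows.
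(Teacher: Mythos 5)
Your proposal is correct and follows essentially the same route as the paper: both apply Lemma \ref{bes} twice with the auxiliary marks chosen as $\{j,k\}$ for $\psi_i$ and $\{i,k\}$ for $\psi_j$, and then observe that the two boundary expansions are disjoint and complementary, so their sum is exactly $\sum_{A\ni j,\, B\ni i} D(A|B)$. Your additional remarks on the stability constraints and the identification $D(A|B)=D(B|A)$ are correct and merely make explicit what the paper leaves implicit.
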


\begin{proof} 
Choose a point $k \in [n] - \{i,j\}$ and apply \eqref{besf} to obtain:

\begin{align} \label{psiij}
\psi_i = \sum_{\tiny{\begin{array}{c} A\ni j,k \\ B \ni  i \end{array}}} D(A|B) \ \ \ \ \ \  \psi_j = \sum_{\tiny{\begin{array}{c} A \ni j \\ B \ni i,k \end{array}}} D(A|B).
\end{align}
The lemma follows immediately from adding the two terms in \eqref{psiij}.
 \end{proof}
 
Restricting  $\psi_i$ to a boundary stratum yields the pull-back of the corresponding class $\psi_i$  from the factor hosting the $i$-th mark: in the next lemma we make this statement precise for a boundary divisor, leaving it to the reader the  generalization to an arbitrary stratum.

\begin{lemma}\label{rest}
Consider the gluing morphism
$$
gl_{D}:\Moduli_{g_1, A \cup \{\bullet\}} \times \Moduli_{g_2, B \cup \{\star\}} \to \Moduli_{g,n},
$$
whose image is the boundary divisor denoted $ D(g_1,A|g_2, B)$. Assume that $i\in A$, and denote $p_1: \Moduli_{g_1, A \cup \{\bullet\}} \times \Moduli_{g_2, B \cup \{\star\}} \to \Moduli_{g_1,A \cup \{\bullet\}}$ the first projection. Then
\begin{equation}
gl_{D}^\ast (\psi_i) = p_1^\ast (\psi_i).
\end{equation}
\end{lemma}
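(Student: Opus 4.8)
The plan is to work directly with the cotangent line bundle description of $\psi_i$ from Definition \ref{def-psi}(1), establishing a canonical isomorphism of line bundles $gl_D^*\mathbb{L}_i \cong p_1^*\mathbb{L}_i$ over $\Moduli_{g_1, A\cup\{\bullet\}}\times\Moduli_{g_2, B\cup\{\star\}}$. The lemma then follows immediately by taking first Chern classes, since $c_1$ commutes with pull-back: $gl_D^*\psi_i = gl_D^* c_1(\mathbb{L}_i) = c_1(gl_D^*\mathbb{L}_i) = c_1(p_1^*\mathbb{L}_i) = p_1^*\psi_i$.

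First I would unwind the gluing morphism on points. A pair of pointed curves $\big((C_1,(p_a)_{a\in A},\bullet),(C_2,(p_b)_{b\in B},\star)\big)$ maps to the nodal curve $C$ obtained by identifying the marks $\bullet$ and $\star$ to a node, retaining the remaining marks $p_1,\dots,p_n$ on the respective components. The fiber of $gl_D^*\mathbb{L}_i$ over this pair is the cotangent space $T^*_{p_i}(C)$. Since $i\in A$, the mark $p_i$ lies on $C_1$ and is distinct from the node, so $C$ and $C_1$ are canonically isomorphic in a neighborhood of $p_i$; this identifies $T^*_{p_i}(C)$ with $T^*_{p_i}(C_1)$, which is exactly the fiber of $p_1^*\mathbb{L}_i$ over the same pair.

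The one step that genuinely requires care---and which I expect to be the main obstacle---is to promote this fiberwise identification to an isomorphism of line bundles, i.e.\ to verify compatibility in families. To do this I would pull back the universal curve $\UF_{g,n}\to\Moduli_{g,n}$ along $gl_D$: the result is the family obtained by gluing the two universal curves $\UF_{g_1,A\cup\{\bullet\}}$ and $\UF_{g_2,B\cup\{\star\}}$ along the sections labeled $\bullet$ and $\star$. For $i\in A$ the $i$-th section of this glued family factors through the first universal curve and its image is disjoint from the gluing locus; hence, in a neighborhood of that section, the glued family agrees with $\UF_{g_1,A\cup\{\bullet\}}$ and their relative dualizing sheaves coincide. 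Using the description $\mathbb{L}_i=\sigma_i^*\omega_{\pi}$ from Definition \ref{def-psi}(2), which is computed in an arbitrarily small neighborhood of the section, this yields the desired canonical isomorphism $gl_D^*\mathbb{L}_i\cong p_1^*\mathbb{L}_i$, completing the proof. The generalization to an arbitrary stratum, left to the reader in the statement, proceeds identically by tracking which vertex hosts the mark $i$.
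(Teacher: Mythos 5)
Your proof is correct and follows essentially the same route as the paper's: identify $T^*_{p_i}(C)$ with $T^*_{p_i}(C_1)$ since $p_i$ is away from the node, conclude $gl_D^*(\mathbb{L}_i)\cong p_1^*(\mathbb{L}_i)$, and take $c_1$. Your extra paragraph justifying the isomorphism in families (via the glued universal curves and $\mathbb{L}_i=\sigma_i^*\omega_\pi$) carefully fills in the step the paper compresses into ``This implies,'' but it is the same argument.
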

\begin{proof}
For a point in $D(g_1,A|g_2, B)$, i.e. a nodal pointed  curve $(C= C_1 \cup C_2, p_1, \ldots, p_n)$  (with $p_i\in C_1$), we have $T^\ast_{p_i}(C)  = T^\ast_{p_i}(C_1)$.   This implies there is an isomorphism $gl_{D}^\ast (\mathbb{L}_i) \cong p_1^\ast (\mathbb{L}_i)$, and the result follows.
\end{proof}
 
As we mentioned earlier, $\psi$ classes arise when performing non-transversal intersections of boundary strata. As before, we make a precise statement for the self intersection of a divisor, and leave the more general statement as an exercise.

 \begin{lemma} 
 With notation as in Lemma \ref{rest}, the self intersection of a boundary divisor $D(g_1,A|g_2, B) \in R^1(\Moduli_{g,n})$ is given by:
 \begin{equation}\label{nont}
 D(g_1,A|g_2, B)^2 ={gl_{D}}_\ast (-\psi_\bullet - \psi_\star ),
 \end{equation}
 where the two $\psi$ classes are understood to be pulled back via the two projections $p_1, p_2$.
 \end{lemma}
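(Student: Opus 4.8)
The plan is to compute the self-intersection $D(g_1,A|g_2,B)^2$ via the standard excess-intersection / self-intersection formula, which expresses the self-intersection of a smooth divisor in terms of the pushforward of its first Chern class of the normal bundle. More precisely, writing $D = D(g_1,A|g_2,B) = {gl_D}_\ast[\,\cdot\,]$, the class $D^2$ is the pushforward along $gl_D$ of $gl_D^\ast(D)$, and $gl_D^\ast(D) = c_1(N_{gl_D})$, the first Chern class of the normal bundle to the gluing morphism. So the crux is to identify this normal bundle. The expected answer is that the normal bundle to the node is the tensor product of the two tangent lines to the branches at the preimages of the node, i.e. $N_{gl_D} \cong \mathbb{L}_\bullet^\vee \otimes \mathbb{L}_\star^\vee$, whence $c_1(N_{gl_D}) = -\psi_\bullet - \psi_\star$ (with the two $\psi$ classes pulled back along $p_1,p_2$ as stated). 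Pushing forward then yields exactly the claimed formula \eqref{nont}.

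First I would set up the self-intersection formula: for a gluing morphism $gl_D$ onto a smooth boundary divisor $D$ (working with $\mathbb{Q}$-coefficients to absorb the degree $|\Aut|$ issues, or assuming $|\Aut(\Gamma)|=1$ for this divisorial case), we have $D^2 = {gl_D}_\ast\, gl_D^\ast(D)$ by the projection formula, and $gl_D^\ast(D)$ equals the Euler class (first Chern class) of the normal bundle of the divisor, equivalently of the gluing morphism. Next I would identify this normal bundle geometrically. The standard deformation-theoretic fact about nodal curves is that the space of first-order smoothings of a node is canonically the tensor product $T_{\bullet}C_1 \otimes T_{\star}C_2$ of the tangent spaces to the two branches, so the normal direction to the boundary stratum at a node is $\mathbb{L}_\bullet^\vee \otimes \mathbb{L}_\star^\vee$. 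Taking $c_1$ and recalling the definition $\psi_\bullet = c_1(\mathbb{L}_\bullet)$, $\psi_\star = c_1(\mathbb{L}_\star)$ (Definition \ref{def-psi}), this gives $c_1(N_{gl_D}) = -\psi_\bullet - \psi_\star$. Finally I would substitute into the self-intersection formula to conclude \eqref{nont}.

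The main obstacle is establishing that the normal bundle to the boundary divisor is indeed the tensor product of the two cotangent lines at the node — this is the one piece of genuine geometric input, and it rests on the smoothing-of-nodes description of the deformation theory of stable curves. In the spirit of the paper's other "skeletal" proofs, I would likely cite or briefly recall this deformation-theoretic fact rather than reprove it from scratch, perhaps pointing to \cite{hm:moc} or \cite{y:inom}. A secondary, purely bookkeeping concern is sign conventions and the precise meaning of "pulled back via $p_1,p_2$": I must be careful that the minus signs in $-\psi_\bullet-\psi_\star$ come from dualizing (the normal bundle to the smoothing is the dual of the cotangent tensor product) and that $\psi_\bullet,\psi_\star$ are understood as $p_1^\ast\psi_\bullet$ and $p_2^\ast\psi_\star$ respectively, consistent with Lemma \ref{rest}.
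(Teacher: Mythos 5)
Your proposal is correct and follows essentially the same route as the paper: both arguments reduce the self-intersection to ${gl_D}_\ast c_1(N)$ and identify the normal bundle with $\mathbb{L}_\bullet^\vee \boxtimes \mathbb{L}_\star^\vee$ via the first-order smoothing of the node (the paper makes this explicit with the local equation $xy=t$, identifying $t$ with a normal coordinate and $x,y$ with tangent coordinates on the two branches). No gaps.
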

\begin{proof}
The germ of a path moving off the divisor $ D(g_1,A|g_2, B)$ consists of a first order deformation of a nodal curve $C = C_1\cup C_2$ (with appropriate sections that are disjoint from the node), with smooth generic fiber.
The local analytic expression for a one parameter smoothing of a node is $xy=t$. Here $t$, the smoothing parameter, may be identified with a local coordinate for the fiber of the normal bundle to the divisor $D(g_1,A|g_2, B)$, and $x$ and $y$ with coordinates on the tangent spaces for the components of the central fiber at the points that are glued together to give the node.
This identification yields the isomorphism
$$
N_{D(g_1,A|g_2, B)|\Moduli_{g,n}} \cong \mathbb{L}^\vee_\bullet \boxtimes  \mathbb{L}^\vee_\star,
$$
which   implies \eqref{nont}.
\end{proof} 
 
We now define $\omega$ classes, sometimes also called {\it stable $\psi$ classes}, which are just pull-backs of $\psi$ classes from spaces of curves with only one mark.
 
\begin{definition}\label{def-omega}
Let $g,n\geq 1$, $i\in [n]$, and let $\rho_i:\Moduli_{g,n}\to\Moduli_{g,\{i\}}$ be the {\it rememberful} morphism, i.e. a composition of forgetful morphisms for  all but the $i$-th mark. Then we define $$\omega_i := \rho_i^*\psi_i$$ in $R^1(\Moduli_{g,n}$). 
\end{definition}

Iterated applications of Lemma \ref{lem-comp} show the relation between the classes $\psi_i$ and $\omega_i$ on $\Moduli_{g,n}$. Denote by $D(A|B)$ the divisor $D(g,A|0,B)$. We call any boundary stratum where all the genus is concentrated at one vertex of the dual graph a stratum of {\bf rational tails} type.

\begin{lemma}
Let $g,n\geq 1$ and $i\in [n]$. Then:
\begin{equation} \label{ompsirel}
\psi_i= \omega_i + \sum_{B\ni i} D(A|B)
\end{equation}
\end{lemma}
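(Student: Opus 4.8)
The plan is to prove \eqref{ompsirel} by induction on $n$, using the single-point comparison of Lemma \ref{lem-comp} as the inductive engine and the behavior of rational tails divisors under pullback along a forgetful morphism as the combinatorial input. For the base case $n=1$ the rememberful morphism $\rho_i$ is the identity, so $\omega_i = \psi_i$ by Definition \ref{def-omega}, while the sum on the right-hand side is empty, since there is no rational tails divisor on $\Moduli_{g,1}$ (the single mark cannot be separated onto a rational component carrying at least three special points). Hence the identity holds trivially.

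For the inductive step I would assume \eqref{ompsirel} on $\Moduli_{g,n}$ and let $\pi = \pi_{n+1}\colon \Moduli_{g,n+1}\to\Moduli_{g,n}$ forget the last mark, with $i\in[n]$. I would combine three ingredients. First, Lemma \ref{lem-comp} gives $\psi_i = \pi^*\psi_i + D_{i,n+1}$ on $\Moduli_{g,n+1}$. Second, since forgetful morphisms commute, the rememberful morphisms factor as $\rho_i^{(n+1)} = \rho_i^{(n)}\circ\pi$, and therefore $\pi^*\omega_i = \omega_i$; that is, the $\omega$ class simply pulls back to the $\omega$ class. Third — and this is the step requiring care — I would establish the pullback formula for a rational tails divisor,
\[
\pi^* D(A|B) = D(A\cup\{n+1\}\,|\,B) + D(A\,|\,B\cup\{n+1\}),
\]
with each component appearing with multiplicity one. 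This follows from the identification of $\Moduli_{g,n+1}$ with the universal curve over $\Moduli_{g,n}$: over the generic point of $D(A|B)$ the fiber is the two-component nodal curve, and placing the mark $n+1$ generically on one component or the other yields exactly the two divisors above, whereas the locus where $n+1$ meets the node has codimension two and contributes no divisorial component.

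Applying the inductive hypothesis, pulling back by $\pi$, and substituting into Lemma \ref{lem-comp} then reduces the claim to the purely combinatorial identity
\[
D_{i,n+1} + \pi^*\!\!\sum_{B\ni i} D(A|B) = \sum_{B'\ni i} D(A'|B'),
\]
where the left-hand sum ranges over rational tails divisors of $\Moduli_{g,n}$ and the right-hand sum over those of $\Moduli_{g,n+1}$. I would verify this by sorting the divisors on $\Moduli_{g,n+1}$ according to the side on which $n+1$ sits: those with $n+1\in A'$, and those with $n+1\in B'$ and $|B'|\geq 3$, are precisely the two families of terms produced by the pullback formula, while the single remaining divisor, with $B' = \{i,n+1\}$, is exactly $D_{i,n+1}$.

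The main obstacle is this last bookkeeping, and in particular the stability threshold governing it: the partition $B=\{i\}$ is not a legitimate rational tails divisor on $\Moduli_{g,n}$ (the would-be rational component has only two special points), so it is absent from the pulled-back sum, and its ``completion'' $B' = \{i,n+1\}$ on $\Moduli_{g,n+1}$ is supplied precisely by the correction term $D_{i,n+1}$ of Lemma \ref{lem-comp}. Matching this boundary case, and confirming that no other divisors are created or double-counted, is the combinatorial heart of the argument; everything else is a routine assembly of the three ingredients above.
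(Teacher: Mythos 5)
Your proof is correct and is essentially the paper's intended argument: the paper offers no proof beyond the remark that the identity follows from ``iterated applications of Lemma \ref{lem-comp},'' and your induction on $n$ --- pulling back $\omega_i$ along the forgetful map, expanding $\pi^*D(A|B)$ into the two divisors obtained by distributing the new mark, and matching the single unstable boundary case $B'=\{i,n+1\}$ with the correction term $D_{i,n+1}$ --- is precisely that iteration written out in full. The bookkeeping at the stability threshold is handled correctly, so there is nothing to add.
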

In words, this means that $\psi_i$ is obtained from $\omega_i$ by adding all divisors of rational tails where the $i$-th mark is contained in the rational component.

We conclude this section by discussing how $\omega$ classes restrict to boundary strata. When the $i$-th mark is on a component that remains stable after forgetting all other marks, then one can show, with an identical proof to the case of $\psi$ classes, that  $\omega_i$ restricts to the class $\omega_i$ pulled back via the projection from the factor containing the $i$-th mark. Things are more interesting when the $i$-th mark is on a rational tail, as we show in the next lemma.

\begin{lemma}\label{falldown}
Let $D(A|B)$ be a divisor of rational tails, and suppose the $i$-th marked point is on the rational component ($i\in B$). Then 
\begin{equation}\omega_i \cdot D(A|B) = {gl_{D}}_\ast ( \omega_{\bullet}),
\end{equation}
where as usual $\omega_{\bullet}$ denotes the class pulled back from the projection $p_1: \Moduli_{g, A \cup \{\bullet\}} \times \Moduli_{0, B \cup \{\star\}} \to \Moduli_{g,A \cup \{\bullet\}}$
\end{lemma}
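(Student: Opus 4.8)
The plan is to combine the projection formula with a careful analysis of the rememberful morphism restricted to the boundary divisor. Write $gl_D:\Moduli_{g,A\cup\{\bullet\}}\times\Moduli_{0,B\cup\{\star\}}\to\Moduli_{g,n}$ for the gluing morphism whose image is $D(A|B)$, so that $D(A|B)={gl_D}_\ast[\Moduli_{g,A\cup\{\bullet\}}\times\Moduli_{0,B\cup\{\star\}}]$. Since $\omega_i$ is a genuine Chow class and $D(A|B)$ is the pushforward of a fundamental class, the projection formula gives
\[
\omega_i\cdot D(A|B)={gl_D}_\ast\big(gl_D^\ast\,\omega_i\big).
\]
Thus the whole problem reduces to identifying the pull-back $gl_D^\ast\,\omega_i$, and the claim is that it equals $\omega_\bullet$ (pulled back via $p_1$), so that there is \emph{no} rational-tail correction of the kind appearing in \eqref{ompsirel}.

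To compute $gl_D^\ast\,\omega_i$ I would use the definition $\omega_i=\rho_i^\ast\psi_i$ together with functoriality of pull-back, rewriting
\[
gl_D^\ast\,\omega_i=gl_D^\ast\,\rho_i^\ast\psi_i=(\rho_i\circ gl_D)^\ast\psi_i.
\]
The crux is therefore the commutativity of the diagram of tautological maps, namely the identity $\rho_i\circ gl_D=\rho_\bullet\circ p_1$, where $\rho_\bullet:\Moduli_{g,A\cup\{\bullet\}}\to\Moduli_{g,\{\bullet\}}$ forgets every mark but $\bullet$ and we identify $\Moduli_{g,\{i\}}\cong\Moduli_{g,\{\bullet\}}$ canonically. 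To verify this I would trace a point of the product through both compositions: gluing produces a nodal curve $C_1\cup C_2$ with $C_2$ the rational tail carrying the marks $B\cup\{\star\}$ and $i\in B$; applying $\rho_i$ forgets all marks except $i$, and since the only special points surviving on $C_2$ are $i$ and the node $\star$, the component $C_2$ becomes unstable and is contracted, depositing the mark $i$ at the point of $C_1$ formerly labelled $\bullet$. The resulting one-pointed genus $g$ curve is exactly the image of the $C_1$-factor under $\rho_\bullet$, which establishes the identity (here $g\geq 1$ guarantees that $(C_1,\bullet)$ is already stable, so $C_1$ is never touched).

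Granting the diagram, I would finish with
\[
(\rho_i\circ gl_D)^\ast\psi_i=(\rho_\bullet\circ p_1)^\ast\psi_\bullet=p_1^\ast\,\rho_\bullet^\ast\psi_\bullet=p_1^\ast\,\omega_\bullet,
\]
which is precisely $\omega_\bullet$ in the notation of the statement; feeding this back through the projection formula yields $\omega_i\cdot D(A|B)={gl_D}_\ast(\omega_\bullet)$. The main obstacle is not the formal intersection-theoretic manipulation but the geometric verification of $\rho_i\circ gl_D=\rho_\bullet\circ p_1$: one must check that the order in which marks are forgotten is immaterial, and, more importantly, that forgetting the marks of $B\smallsetminus\{i\}$ together with those of $A$ genuinely collapses the rational tail and transfers $i$ to the node, rather than producing a stray unstable component or an extra boundary contribution. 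This is exactly the phenomenon that distinguishes the $\omega$ computation from the $\psi$ computation of Lemma \ref{rest}, and it is where the stable nature of $\omega$ classes causes the correction term to vanish.
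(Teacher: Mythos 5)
Your proposal is correct and follows essentially the same route as the paper's proof: projection formula to reduce to computing $gl_D^\ast\omega_i$, then commutativity of the forgetful/gluing diagram (the paper factors the rememberful morphism through $\Moduli_{g,A\cup\{i\}}$, so that $\rho_{A\cup\{i\}}\circ gl_D=p_1$ becomes a single contraction step, whereas you verify the composite identity $\rho_i\circ gl_D=\rho_\bullet\circ p_1$ directly, but the geometric content --- the rational tail collapses and deposits the $i$-th mark at the node --- is identical). No gaps.
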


\begin{proof}  Consider the following diagram:

\begin{center}
\begin{tikzcd}[row sep=scriptsize, column sep=scriptsize]
\Moduli_{g,A\cup\{\bullet\}}\times \Moduli_{0,B\cup\{\star\}} \arrow[d, "gl_D"] \arrow[rd, "p_1"] \\
\Moduli_{g,n} \arrow[r,"\rho_{A\cup\{i\}}"] & \Moduli_{g,A\cup\{i\}} \arrow[r, "\rho_i"] & \Moduli_{g,\{i\}}
\end{tikzcd}
\end{center}

The map $p_1$ is the projection onto $\Moduli_{g,A\cup\{\bullet\}}$ composed with the isomorphism which relabels $\bullet$ to $i$. Then by the commutativity of the diagram and the definition of $\omega_i$,
\begin{align*}
\omega_i \cdot D(A|B)&= {gl_{D}}_\ast gl_D^*(\omega_i )\\
&= {gl_{D}}_\ast gl_D^*\rho_{A\cup\{i\}}^*\rho_i^*(\psi_i) \\
&= {gl_{D}}_\ast p_1^*\rho_i^*(\psi_i) \\
&= {gl_{D}}_\ast(\omega_{\bullet}).
\end{align*}
\end{proof}

\begin{remark}
In order to streamline notation, when  we write $\psi$ and $\omega$ classes relative to some  flag of a dual graph, we implicitly mean the push-forward via the appropriate gluing morphism of the pull-back via the projection to the factor hosting the flag, of the corresponding class. See Figure \ref{fig-not} for an illustration that should make this tongue twister much more clear. 
\end{remark}

\begin{figure}[bt]

\begin{tikzpicture}
\draw[very thick]  (0,0) circle (0.30);
\node at (0,0) {$g$};
\draw[very thick] (0.30,0) -- (1.5,0);
\fill (1.5,0) circle (0.10);
\draw[very thick] (1.5,0) -- (2,0);
\draw[very thick] (1.5,0) -- (2,0.5);
\draw[very thick] (1.5,0) -- (2,-0.5);
\node at (2.2,0) {$6$};
\node at (2.2,0.5) {$3$};
\node at (2.2,-0.5) {$7$};
\node at (0.55,0.25) {$\psi_\bullet$};
\node at (1.2,0.2) {$\star$};

\draw[very thick] (0,0.30) -- (0,0.7);
\node at (-0.1,0.9) {$2$};

\draw[very thick] (-0.21,0.21) -- (-0.50,0.5);
\node at (-0.7,0.6) {$5$};

\draw[very thick] (-0.30,0) -- (-.7,0);
\node at (-1,0) {$4$};

\draw[very thick] (-0.21,-0.21) -- (-.5,-.5);
\node at (-0.8,-.5) {$1$};

\end{tikzpicture}
\caption{The dual graph identifying the divisor $D(A|B)$, with $A= \{1,2,4,5\}$ and $B= \{ 3,6,7\}$. The graph is decorated with a $\psi$ class on a flag. This is shorthand for ${gl_{D}}_\ast p_1^\ast(\psi_\bullet)$.
}\label{fig-not}
\end{figure}
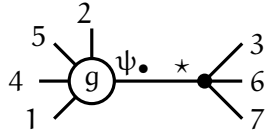

\section{Main theorem} \label{sec-mt}

In this section we state and proof the graph formula for the class of an arbitrary monomial in $\omega$ classes. We begin by introducing the family of boundary strata which appear in the formula. Throughout this section, we fix two positive integers $g$ and $n$ for genus and number of marked points.

We denote by $\calP \vdash [n]$ a partition of the set $[n]$, i.e. a collection of  pairwise disjoint subsets $P_1,\dots, P_r$ such that 
$$
P_1 \cup \ldots \cup P_r = [n].
$$
We wish to consider partitions as unordered: in other words, we identify two partitions if they differ by a permutation of the parts.
We assume all the $P_i$'s are non-empty, and say $\calP$ has length $r$ (and write $\ell(\calP)=r$).
We assign to this data a  stratum in $\overline{\calM}_{g,n}$, of codimension equal to the number of parts of $\calP$ of size greater than one. 
\begin{definition}
Given $\calP \vdash n$, when $|P_i| = 1$  denote by $\bullet_i$ the element of the singleton $P_i$.  For $|P_i|>1$, introduce new labels $\bullet_i$ and $\star_i$. The \emph{pinwheel stratum} $\Delta_{\calP}$ is the image of the gluing morphism
$$
gl_{\calP}: \Moduli_{g, \{\bullet_1, \ldots, \bullet_r\}} \times \prod_{|P_i|>1} \Moduli_{0, \{\star_i\}\cup P_i} \to \Moduli_{g,n}
$$
that glues together each $\bullet_i$ with $\star_i$.
Since  the general point of a pinwheel stratum represents a curve with no nontrivial automorphisms, the class of the stratum equals the push-forward of the fundamental class via $gl_{\calP}$.

\begin{equation}
\label{pinwheel}
[\Delta_{\calP}] = gl_{\calP \ast}([1]).
\end{equation}
\end{definition}
Figure \ref{figure_dualgraph} shows  an example of the dual graph of a generic element of a pinwheel stratum. 

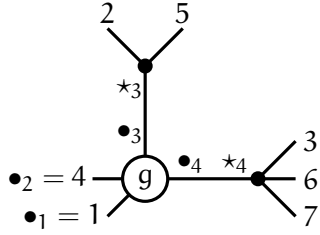
\begin{figure}[bt]

\begin{tikzpicture}

\draw[very thick]  (0,0) circle (0.30);
\node at (0,0) {$g$};

\draw[very thick] (0.30,0) -- (1.5,0);
\fill (1.5,0) circle (0.10);
\draw[very thick] (1.5,0) -- (2,0);
\draw[very thick] (1.5,0) -- (2,0.5);
\draw[very thick] (1.5,0) -- (2,-0.5);
\node at (2.2,0) {$6$};
\node at (2.2,0.5) {$3$};
\node at (2.2,-0.5) {$7$};
\node at (0.6,0.2) {$\bullet_4$};
\node at (1.2,0.2) {$\star_4$};

\draw[very thick] (0,0.30) -- (0,1.5);
\fill (0,1.5) circle (0.10);
\draw[very thick] (0,1.5) -- (-0.5,2);
\draw[very thick] (0,1.5) -- (0.5,2);
\node at (-0.5,2.2) {$2$};
\node at (0.5,2.2) {$5$};
\node at (-0.2, .6) {$\bullet_3$};
\node at (-0.2, 1.2) {$\star_3$};

\draw[very thick] (-0.30,0) -- (-.7,0);
\node at (-1.3,0) {$\bullet_2 = 4$};

\draw[very thick] (-0.21,-0.21) -- (-.5,-.5);
\node at (-1.1,-.5) {$\bullet_1 = 1$};

\end{tikzpicture}
\caption{The dual graph to the generic curve parameterized by the pinwheel stratum $\Delta_\calP$, with $\calP = \{1\},\{4\},\{2,5\},\{3,6,7\}.$ The flags of the graph are decorated with the auxiliary markings coming from the gluing morphism.}\label{figure_dualgraph}
\end{figure}

\begin{theorem}
\label{omega}
 For $1\leq i \leq n$, let $k_i$ be a non-negative integer, and let $K = \sum_{i=1}^n k_i$. 
For any partition $\calP  = \{P_1,\dots, P_r \}
\vdash [n]$, define $\alpha_j := \sum_{i\in P_j} k_i$.
With notation  as in the previous paragraph, the following formula holds in  $R^{K}(\Moduli_{g,n})$:
\begin{align}
\label{omegafor}
\prod_{i=1}^n\omega_i^{k_i} = \sum_{\calP \,\vdash [n]}[\Delta_{\calP}]\prod_{j=1}^{\ell(\calP)} \frac{\psi_{\bullet_j}^{\alpha_j}}{(-\psi_{\bullet_j}-\psi_{\star_j})^{1-\delta_j}},
\end{align}
where $\delta_j = \delta_{1,|P_j|}$ is a Kronecker delta and
we follow the standard convention of considering negative powers of $\psi$  equal to $0$.
\end{theorem}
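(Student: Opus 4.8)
The plan is to induct on the total degree $K=\sum_i k_i$, multiplying a known expansion by one additional factor $\omega_\ell$ and reorganizing the result back into pinwheel form. The base case $K=0$ is immediate: the left-hand side is the fundamental class, while on the right-hand side any part $P_j$ of size $>1$ contributes the factor $\tfrac{1}{-\psi_{\bullet_j}-\psi_{\star_j}}$, which under the stated convention expands as a series in strictly negative powers of $\psi_{\bullet_j}$ and therefore vanishes. Hence only the all-singletons partition survives, and its stratum is all of $\Moduli_{g,n}$, so both sides equal $1$.

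For the inductive step I fix a term $gl_{\calP\ast}(\mathrm{dec}_\calP)$ in the expansion of $\prod_i\omega_i^{k_i}$ and multiply by $\omega_\ell$, where $\ell$ lies in the part $P_{j_0}$. By the projection formula this equals $gl_{\calP\ast}\big(gl_\calP^\ast(\omega_\ell)\cdot\mathrm{dec}_\calP\big)$, so the entire computation takes place on the product of factors attached to the pinwheel. The first key step is to restrict $\omega_\ell$: whether $\ell$ is the singleton $\bullet_{j_0}$ sitting on the central genus-$g$ vertex, or $\ell$ lies on the rational tail $\Moduli_{0,P_{j_0}\cup\{\star_{j_0}\}}$, the restriction equals the class $\omega_{\bullet_{j_0}}$ pulled back from the central factor $\Moduli_{g,\{\bullet_1,\dots,\bullet_r\}}$. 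In the first case this is the standard restriction of a stable $\psi$ class to a factor that remains stable; in the second it is exactly the phenomenon of Lemma \ref{falldown}, since forgetting all marks but $\ell$ contracts the rational tail and carries $\ell$ to the node $\bullet_{j_0}$.

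Next I convert $\omega_{\bullet_{j_0}}$ to $\psi_{\bullet_{j_0}}$ on the central vertex via \eqref{ompsirel}, writing $\omega_{\bullet_{j_0}}=\psi_{\bullet_{j_0}}-\sum D(\,\cdot\mid B')$ with the sum over rational tails of the central factor containing $\bullet_{j_0}$. The $\psi_{\bullet_{j_0}}$ term raises $\alpha_{j_0}$ by one and reproduces precisely the term of \eqref{omegafor} for $\omega_\ell\cdot\prod_i\omega_i^{k_i}$ indexed by the same partition $\calP$. Each correction term $-D(\,\cdot\mid B')\cdot\mathrm{dec}_\calP$ transfers the flags of $B'$ (nodes of old tails, or singleton marks) onto a new rational component $R'$; after gluing into $\Moduli_{g,n}$ these become deeper rational-tail strata of chain type (genus-$g$ vertex—$R'$—old tails) that sit inside the coarser pinwheel stratum $\Delta_{\calP''}$ obtained by merging all parts indexed by $B'$ into one part $Q=\bigcup_{\bullet_j\in B'}P_j$. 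Reinterpreting each correction as a decorated boundary stratum inside the merged rational tail $\Moduli_{0,Q\cup\{\star''\}}$, I must show that the totality of these contributions assembles into the single pinwheel decoration $\tfrac{\psi_{\bullet''}^{\alpha_Q}}{-\psi_{\bullet''}-\psi_{\star''}}$. Here the self-intersection formula \eqref{nont} generates the $-\psi_{\bullet''}-\psi_{\star''}$ denominators from the non-transverse pieces, and Lemmas \ref{bes} and \ref{psisums} are used to recognize the accumulating sums of genus-$0$ boundary divisors on the merged tail as the class $\psi_{\star''}$, matching the truncated geometric-series expansion of the rational function in $\psi_{\star''}/\psi_{\bullet''}$.

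The hard part is exactly this final reorganization. Multiplication by $\omega_\ell$ simultaneously (i) raises an exponent within a fixed part and (ii) produces corrections that \emph{geometrically} look like finer (chain) strata but must be repackaged as \emph{coarser} pinwheel strata carrying enriched $\psi_{\star}$-decorations; controlling this refinement-versus-coarsening bookkeeping is delicate. Verifying the matching exactly---getting every sign and every truncation of the $\psi_{\star}/\psi_{\bullet}$ series correct, and confirming that the genus-$0$ boundary sums produced are precisely those of Lemma \ref{bes}---is where the combinatorial content of Theorem \ref{omega} resides, and it is the high symmetry of the pinwheel family that makes these cancellations close up into the clean formula \eqref{omegafor}.
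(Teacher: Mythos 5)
Your outline assembles the right ingredients (induction on $K$, the substitution $\omega_i = \psi_i - \sum_{B\ni i} D(A|B)$, Lemma \ref{falldown}, Lemma \ref{psisums}), but it is a strategy, not a proof: the step you yourself flag as ``the hard part'' --- showing that the chain-type correction strata reassemble into the coarser pinwheel decorations --- is exactly where all the content of Theorem \ref{omega} lives, and you give no argument for it. Moreover, one of the claims you do make is not correct under the stated truncation convention: the classes
$$\psi_{\bullet}\cdot\frac{\psi_{\bullet}^{\alpha}}{-\psi_{\bullet}-\psi_{\star}} \qquad\text{and}\qquad \frac{\psi_{\bullet}^{\alpha+1}}{-\psi_{\bullet}-\psi_{\star}}$$
are \emph{not} equal, because the two geometric series are truncated at different points; they differ by the term $(-1)^{\alpha+1}\psi_{\star}^{\alpha}$, which is generally nonzero on the rational tail. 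So the $\psi_{\bullet_{j_0}}$ term does not ``reproduce precisely'' the same-partition term of the new formula, and your remaining corrections would have to account for these truncation defects as well --- this is exactly the role played by the surviving $\psi_{\star_1}$ contribution from Lemma \ref{psisums} in the paper's argument. Finally, the self-intersection formula \eqref{nont} plays no role in producing the denominators $(-\psi_\bullet-\psi_\star)$ (they arise from the inductive algebra, not from non-transverse intersections), so invoking it suggests the mechanism generating the rational functions has not been pinned down.

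The paper sidesteps the heavy reorganization you face by a different order of operations and a \emph{double} induction. It substitutes $\omega_1 = \psi_1 - \sum_{1\in B}D(A|B)$ on $\Moduli_{g,n}$ \emph{before} expanding anything; then Lemma \ref{falldown} converts each correction $\prod\omega_i^{k_i}\cdot D(A|B)$ into a pure $\omega$-monomial on $\Moduli_{g,A\cup\{L_1\}}$, a space with fewer marked points, to which the theorem applies by induction on $n$. After that, the only new inputs are Lemma \ref{psisums}, which identifies the resulting sum of chain strata with the decoration $\psi_{\bullet_1}+\psi_{\star_1}$ on the merged tail, and the one-line series identity correcting the truncation mismatch above. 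If you wish to keep your single induction on $K$, you must prove directly that the sum, over all refinements of a given partition and all central-vertex divisors $D(\,\cdot\,|B')$, of the decorated chain strata equals the sum of the truncation defects over coarser pinwheels; as it stands, your proposal asserts this identity rather than proving it.
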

\begin{remark}\label{rem:intfor}
In formula \eqref{omegafor}, the denominator of  the rational function is intended to be expanded as a geometric series in $\psi_\star/\psi_\bullet$. If $|P_j|>1$, 	we have
\begin{equation}\label{ratfun}
 \frac{\psi_{\bullet_j}^{\alpha_j}}{(-\psi_{\bullet_j}-\psi_{\star_j})} = - \psi_{\bullet_j}^{\alpha_j-1}+ \psi_{\bullet_j}^{\alpha_j-2} \psi_{\star_j} - \psi_{\bullet_j}^{\alpha_j-3} \psi_{\star_j}^{2}+ \ldots 
\end{equation}
The sum in \eqref{ratfun} is finite since we defined negative powers of $\psi$ to vanish. We also observe that if $\alpha_j=0$, the right-hand side of \eqref{ratfun}  equals $0$. Hence  the formula is supported on pinwheel strata where each rational tail has at least one point $i$ with strictly positive $k_i$.
\end{remark}

Before we start the proof of Theorem \ref{omega}, here is a small example of the statement.
\begin{example}\label{ex-for}
On $\Moduli_{g,3}$, we have:
\begin{equation} \omega_1^3 \omega_2^2 = \psi_1^3 \psi_2^2 
- \psi_\bullet^4[\Delta_{\{1,2\}\{3\}}] 
- \psi_\bullet^4 \psi_2^2 [\Delta_{\{1,3\}\{2\}}]
- \psi_1^3 \psi_\bullet^2[\Delta_{\{1\}\{2,3\}}]
- (\psi_\bullet^4- \psi_\bullet^3 \psi_\star) [\Delta_{\{1,2, 3\}}]
\end{equation}
The dual graphs to the strata are illustrated in Figure \ref{dg}.
\end{example}

\begin{figure}[bt]

\begin{tikzpicture}

\draw[very thick]  (0,0) circle (0.30);
\node at (0,0) {$g$};
\draw[very thick] (0.30,0) -- (1.5,0);
\fill (1.5,0) circle (0.10);
\draw[very thick] (1.5,0) -- (2,0.5);
\draw[very thick] (1.5,0) -- (2,-0.5);
\node at (2.2,0.5) {$1$};
\node at (2.2,-0.5) {$2$};
\node at (0.6,0.2) {$\bullet$};
\node at (1.2,0.2) {$\star$};
\draw[very thick] (-0.30,0) -- (-.7,0);
\node at (-0.9,0) {$ 3$};
\node at (1,-1){\Large{$\Gamma_{\Delta_{\{1,2\},\{3\}}}$}};
\draw[very thick]  (4,0) circle (0.30);
\node at (4,0) {$g$};
\draw[very thick] (4.30,0) -- (5.5,0);
\fill (5.5,0) circle (0.10);
\draw[very thick] (5.5,0) -- (6,0.5);
\draw[very thick] (5.5,0) -- (6,-0.5);
\node at (6.2,0.5) {$1$};
\node at (6.2,-0.5) {$3$};
\node at (4.6,0.2) {$\bullet$};
\node at (5.2,0.2) {$\star$};
\draw[very thick] (3.70,0) -- (3.3,0);
\node at (3.1,0) {$ 2$};
\node at (5,-1){\Large{$\Gamma_{\Delta_{\{1,3\},\{2\}}}$}};
\draw[very thick]  (8,0) circle (0.30);
\node at (8,0) {$g$};
\draw[very thick] (8.30,0) -- (9.5,0);
\fill (9.5,0) circle (0.10);
\draw[very thick] (9.5,0) -- (10,0.5);
\draw[very thick] (9.5,0) -- (10,-0.5);
\node at (10.2,0.5) {$2$};
\node at (10.2,-0.5) {$3$};
\node at (8.6,0.2) {$\bullet$};
\node at (9.2,0.2) {$\star$};
\draw[very thick] (7.70,0) -- (7.3,0);
\node at (7.1,0) {$ 1$};
\node at (9,-1){\Large{$\Gamma_{\Delta_{\{1\},\{2,3\}}}$}};
\draw[very thick]  (12,0) circle (0.30);
\node at (12,0) {$g$};
\draw[very thick] (12.30,0) -- (13.5,0);
\fill (13.5,0) circle (0.10);
\draw[very thick] (13.5,0) -- (14,0);
\draw[very thick] (13.5,0) -- (14,0.5);
\draw[very thick] (13.5,0) -- (14,-0.5);
\node at (14.2,0) {$2$};
\node at (14.2,0.5) {$1$};
\node at (14.2,-0.5) {$3$};
\node at (12.6,0.2) {$\bullet$};
\node at (13.2,0.2) {$\star$};
\node at (13,-1){\Large{$\Gamma_{\Delta_{\{1,2,3\}}}$}};
\end{tikzpicture}
\caption{The dual graphs for the strata in Example \ref{ex-for}.}\label{dg}
\end{figure}
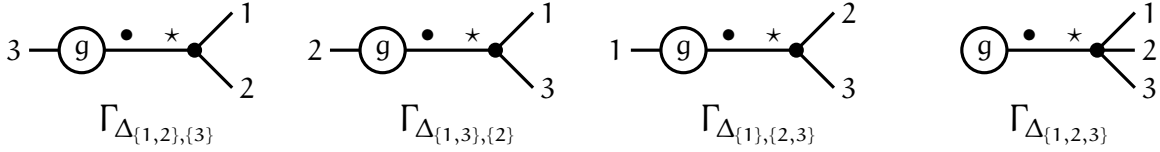

 \begin{proof}
 We proceed via induction on $n$ and the total power $K =\sum_{i=1}^n k_i$. We prove the base case $K=1$ for every $n$; without loss of generality let $k_1 = 1$ and  $k_{i} = 0$ for $i\not=1$. On the left-hand side of \eqref{omegafor}, we just have $\omega_1$.
On the right-hand side, the  pinwheel stratum  corresponding to the partition $\calP = \{\{1\},\{2\},\dots,\{n\}\}$ is $\Moduli_{g,n}$; it appears in \eqref{omegafor} with coefficient  $\psi_1$. 
As seen in Remark \ref{rem:intfor}, non-zero contributions only come from strata where each part of size greater than one has a point with non-zero $k_i$. In this case, this only leaves partitions with exactly one part $P_1$ of size greater than one, and further it must be that $1\in P_1$. We have $[\Delta_\calP] = D([n]\smallsetminus P_1|P_1)$, $\alpha_1 = 1$, and all other $\alpha$'s equal zero.  The coefficient from the first part is
 $$\frac{\psi_{\bullet_j}}{-\psi_{\bullet_j}-\psi_{\star_j}} = -1 + \frac{\psi_{\star_j}}{\psi_{\bullet_j}} - \cdots = -1.$$ 
 All other parts are singletons with $\alpha=0$, and hence  each contributes $\frac{\psi_{\bullet}^0}{1} = 1$ to the product. 
Thus equation \eqref{omegafor} becomes 
\begin{align}
\omega_1 
&= \psi_1 - \sum_{1\in B} D(A|B),
\end{align}
which we have seen in \eqref{ompsirel}. The base case is established.\\

Assume \eqref{omegafor} holds for total monomial power $K \leq m$ for some $m\in\mathbb{Z}$ and for all spaces with fewer than $n$ marked points. We hold $n$ fixed and increase  $K$ by 1 by multiplying, again without loss of generality, by $\omega_1$. We have
\begin{align} \label{for:ind}
\left(\prod_{i=1}^n \omega_i^{k_i}\right)\cdot\omega_1 &=  \prod_{i=1}^n \omega_i^{k_i} \left(\psi_1 - \sum_{1\in B} D(A|B) \right) \nonumber \\
&=\left(\prod_{i=1}^n \omega_i^{k_i}\right)\cdot\psi_1 - \sum_{1\in B} \left(\prod_{i=1}^n \omega_i^{k_i}\right)  D(A|B). 
\end{align}
We may  assume $1\in P_1$. We examine each of the summands on the right-hand side of \eqref{for:ind}. For the first term, by inductive hypothesis we have
\begin{align}
\left(\prod_{i=1}^n \omega_i^{k_i}\right)\cdot\psi_1 &= \left(\sum_{\calP\,\vdash [n]}[\Delta_{\calP}] \prod_{j=1}^{\ell(\calP)} \decorate \right)\cdot \psi_1 \nonumber \\
&= \sum_{|P_1|=1} [\Delta_{\calP}] \prod_{j=2}^{\ell(\calP)} \decorate \cdot \psi_1^{k_1+1} \nonumber \\
&\hspace{1cm} + \sum_{|P_1|>1} \left([\Delta_{\calP}]\cdot \psi_1\right) \prod_{j=1}^{\ell(\calP)} \decorate. \label{eq:lhs}
\end{align}
Note  that for the $|P_1|=1$ cases, the denominator for the $j = 1$ term is 1, as $\delta_1 = \delta_{1,|P_1|} = 1$, and $\{\bullet_1\} = \{1\}$ by the  convention adopted in defining $[\Delta_{\calP}]$.

We now turn to the second summand in \eqref{for:ind}. We rename $B = P_1$ to emphasize that  the point $1$  belongs to this subset (now $A = [n]\backslash P_1$) 
and note that  summing over all divisors $D(A|B)$ with $1\in B$  is equivalent to summing over all 
$|P_1|>1$. We denote $gl_{P_1}: \Moduli_{g, A\cup \{L_1\}} \times \Moduli_{0,P_1\cup\{R_1\}} \to \Moduli_{g,n}$ the gluing morphism whose image is $D(A|P_1)$.
Then we have:
\begin{align}
\hspace{-1cm}\sum_{1\in B} \left(\prod_{i=1}^n \omega_i^{k_i}\right)  & D(A|B) = \sum_{|P_1|>1} \left(\prod_{i=1}^n \omega_i^{k_i}\right)  D(A|P_1) \nonumber \\
&\stackrel{Lemma \ref{falldown}}{=} \sum_{|P_1|>1} {gl_{P_1}}_*\left(\prod_{i\in A}\omega_i^{k_i}\cdot \omega_{L_1}^{\sum_{i\in P_1} k_i} \right)  \nonumber\\
&= \sum_{|P_1|>1} {gl_{P_1}}_*\left(\sum_{\calQ\,\vdash A\cup\{L_1\}} [\Delta_{\calQ}] \prod_{j=1}^{\ell(\calQ)} \decorate  \right)  \label{eq:dc}
\end{align}

The last equality is the application of the inductive hypothesis. Adopting the convention that $L_1 \in Q_1$, we note that $\alpha_1 = \sum_{i\in Q_1\cup P_1}k_i$. We now group the partitions $\calQ$ in two groups: the first is where $Q_1$ is the singleton $\{L_1\}$: in this case ${gl_{P_1}}_\ast ([\Delta_\calQ])$ is the class of the pinwheel stratum $\Delta_\calP$, where $\calP = P_1 \cup \calQ \smallsetminus Q_1$. The second group contains all partions $\calQ$ with $|Q_1|>1$. See Figure \ref{fig-twogps} for a pictorial description. Then \eqref{eq:dc} continues:

\begin{align}
&= \sum_{|P_1|>1} [\Delta_{\calP}] \prod_{j=2}^{\ell(\calP)} \decorate \cdot \psi_{\bullet_1}^{\alpha_1}  \nonumber \\
&\hspace{1cm} + \sum_{|P_1|>1}\sum_{|Q_1|>1} {gl_{P_1}}_*\left( [\Delta_{\calQ}] \cdot \decorateone  \prod_{j=2}^{\ell(\calQ)} \decorate  \right) \nonumber \\ 
&= \sum_{|P_1|>1} [\Delta_{\calP}] \prod_{j=2}^{\ell(\calP)} \decorate \cdot \psi_{\bullet_1}^{\alpha_1} \nonumber \\
&\hspace{1cm} + \sum_{|P_1|>1} [\Delta_{\calP}] \prod_{j=1}^{\ell(\calP)} \decorate \cdot (\psi_1+\psi_{\star_1}) 
\label{eq:rhs}
\end{align}

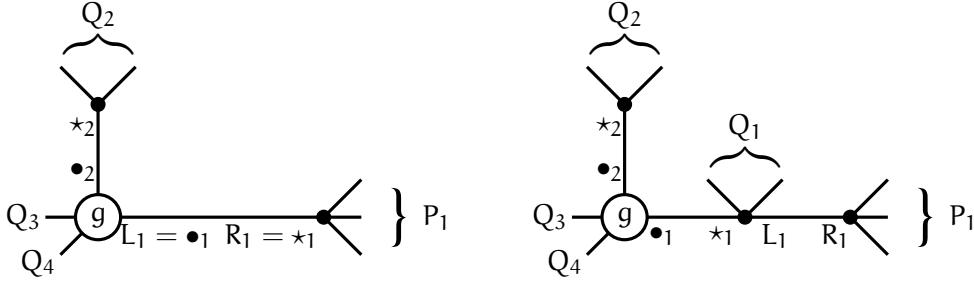
\begin{figure}[bt]

\begin{tikzpicture}
\draw[very thick]  (0,0) circle (0.30);
\node at (0,0) {$g$};

\draw[very thick] (0.30,0) -- (3,0);
\fill (3,0) circle (0.10);
\draw[very thick] (3,0) -- (3.5,0);
\draw[very thick] (3,0) -- (3.5,0.5);
\draw[very thick] (3,0) -- (3.5,-0.5);
\node at (4,0) {{\Huge{\}}}};
\node at (4.5,0) {$P_1$};
\node at (0.9,-0.25) {$L_1 = \bullet_1$};
\node at (2.3,-0.25) {$R_1= \star_1$};

\draw[very thick] (0,0.30) -- (0,1.5);
\fill (0,1.5) circle (0.10);
\draw[very thick] (0,1.5) -- (-0.5,2);
\draw[very thick] (0,1.5) -- (0.5,2);
\node at (0,2.3) {{\Large$\overbrace{}$}};
\node at (0,2.7) {$Q_2$};

\node at (-0.2, .6) {$\bullet_2$};
\node at (-0.2, 1.2) {$\star_2$};

\draw[very thick] (-0.30,0) -- (-.7,0);
\node at (-1,0) {$Q_3$};

\draw[very thick] (-0.21,-0.21) -- (-.5,-.5);
\node at (-0.8,-.6) {$Q_4$};
\draw[very thick]  (7,0) circle (0.30);
\node at (7,0) {$g$};

\draw[very thick] (7.30,0) -- (10,0);
\fill (10,0) circle (0.10);
\draw[very thick] (10,0) -- (10.5,0);
\draw[very thick] (10,0) -- (10.5,0.5);
\draw[very thick] (10,0) -- (10.5,-0.5);
\node at (11,0) {{\Huge{\}}}};
\node at (11.5,0) {$P_1$};
\node at (7.5,-0.25) {$\bullet_1$};
\node at (9,-0.25) {$L_1$};
\node at (8.3,-0.25) {$\star_1$};
\node at (9.8,-0.25) {$R_1$};

\fill (8.6,0) circle (0.10);
\draw[very thick] (8.6,0) -- (8.1,0.5);
\draw[very thick] (8.6,0) -- (9.1,0.5);
\node at (8.6,0.8) {{\Large$\overbrace{}$}};
\node at (8.6,1.2) {$Q_1$};

\draw[very thick] (7,0.30) -- (7,1.5);
\fill (7,1.5) circle (0.10);
\draw[very thick] (7,1.5) -- (6.5,2);
\draw[very thick] (7,1.5) -- (7.5,2);
\node at (7,2.3) {{\Large$\overbrace{}$}};
\node at (7,2.7) {$Q_2$};

\node at (6.8, .6) {$\bullet_2$};
\node at (6.8, 1.2) {$\star_2$};

\draw[very thick] (6.7,0) -- (6.3,0);
\node at (6,0) {$Q_3$};

\draw[very thick] (7-0.21,-0.21) -- (7-.5,-.5);
\node at (7-0.8,-.6) {$Q_4$};

\end{tikzpicture}
\caption{Examples of dual graphs corresponding to the two summands in equation \eqref{eq:rhs}. On the left-hand side we have graphs where $Q_1 = \{L_1\}$; on the right-hand side $|Q_1| >1$. }\label{fig-twogps}
\end{figure}

In the last equality we have applied Lemma \ref{psisums}, and reindexed the sum so that the new $P_1$ is equal to what used to be $P_1 \cup (Q_1 \smallsetminus L_1)$.

We  rewrite \eqref{for:ind} using \eqref{eq:lhs} and \eqref{eq:rhs};  the second term in the right-hand side of \eqref{eq:lhs}  cancels part of the  second term of the right-hand side of \eqref{eq:rhs}, and we obtain:

\begin{align}
\left(\prod_{i=1}^n \omega_i^{k_i}\right)\cdot\omega_1 = \sum_{|P_1|=1} [\Delta_{\calP}] \prod_{j=2}^{\ell(\calP)}\decorate \cdot \psi_1^{k_1+1}   - \sum_{|P_1|>1} [\Delta_{\calP}] \prod_{j=2}^{\ell(\calP)} \decorate \cdot \psi_{\bullet_1}^{\alpha_1}  \nonumber  \\
- \sum_{|P_1|>1} [\Delta_{\calP}] \prod_{j=1}^{\ell(\calP)} \decorate \cdot \psi_{\star_1} \nonumber  \\
= \sum_{|P_1|=1} [\Delta_{\calP}] \prod_{j=2}^{\ell(\calP)} \decorate \cdot \psi_1^{k_1+1}  + \sum_{|P_1|>1} [\Delta_{\calP}] \prod_{j=2}^{\ell(\calP)} \decorate \cdot \left(-\psi_{\bullet_1}^{\alpha_1}  -\psi_{\star_1}\decorateoneb \right).
\label{almostthere} 
\end{align}

We conclude the proof by observing that \eqref{almostthere} gives formula \eqref{omegafor}, with $k_1$ replaced by $k_1+1$ (and hence every occurrence of $\alpha_1$ replaced by $\alpha_1+1$): the first summand shows that the coefficients agree on the nose for partitions with $P_1 = \{1\}$; the second summand deals with partitions where $|P_1|>1$; the coefficients match after noting the elementary identity:
\begin{align*}
\frac{\psi_{\bullet_1}^{\alpha_1+1}}{-\psi_{\bullet_1}-\psi_{\star_1}} &= -\psi_{\bullet_1}^{\alpha_1} + \psi_{\bullet_1}^{\alpha_1-1}\psi_{\star_1} - \psi_{\bullet_1}^{\alpha_1-2}\psi_{\star_1}^2 + \cdots \\
&= -\psi_{\bullet_1}^{\alpha_1} - \psi_{\star_1}\cdot (-\psi_{\bullet_1}^{\alpha_1-1} + \psi_{\bullet_1}^{\alpha_1-2}\psi_{\star_1} - \cdots)
\end{align*}
\end{proof}

\section{Numerical intersections} \label{sec-numint}

In this section we observe some simple consequences of Theorem \ref{omega} for top intersections of $\omega$ classes.

\begin{theorem}\label{num}
For $1\leq i \leq n$, let $k_i$ be a non-negative integer, and let $ \sum_{i=1}^n k_i = 3g-3+n$. 
For any partition $\calP  = \{P_1,\dots, P_r \}
\vdash [n]$, define $\alpha_j := \sum_{i\in P_j} k_i$.
\begin{align}
\label{omegafor2}
\int_{\Moduli_{g,n}}\prod_{i=1}^n\omega_i^{k_i} = \sum_{\calP \,\vdash [n]}(-1)^{n+\ell(\calP)} \int_{\Moduli_{g,\ell(\calP)}}\prod_{i=1}^{\ell(\calP)} \psi_{\bullet_i}^{\alpha_i-|P_i|+1}
\end{align}
\end{theorem}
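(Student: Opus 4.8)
The plan is to integrate the graph formula \eqref{omegafor} of Theorem \ref{omega} over $\Moduli_{g,n}$ and to evaluate the right-hand side stratum by stratum. Since $\sum_i k_i = 3g-3+n = \dim\Moduli_{g,n}$, both sides of \eqref{omegafor} live in $R^{3g-3+n}(\Moduli_{g,n})$, so integration is meaningful; the left-hand side is exactly the quantity we wish to compute. For the right-hand side, fix a partition $\calP\vdash[n]$ of length $r=\ell(\calP)$. Because the general point of a pinwheel stratum carries no nontrivial automorphisms, $[\Delta_\calP]=gl_{\calP\,\ast}([1])$, and by the remark preceding the main theorem the edge decorations are pull-backs of $\psi$ classes via the projections from the factors of
$$
\Moduli_{g,\{\bullet_1,\dots,\bullet_r\}}\times\prod_{|P_j|>1}\Moduli_{0,\{\star_j\}\cup P_j}.
$$
The projection formula then reduces the integral of the $\calP$-summand over $\Moduli_{g,n}$ to an integral over this product.

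First I would expand each edge decoration with $|P_j|>1$ as a geometric series,
$$
\frac{\psi_{\bullet_j}^{\alpha_j}}{-\psi_{\bullet_j}-\psi_{\star_j}}=\sum_{m\geq 0}(-1)^{m+1}\psi_{\bullet_j}^{\alpha_j-1-m}\psi_{\star_j}^m,
$$
observing that $\psi_{\bullet_j}$ is pulled back from the central genus-$g$ factor while $\psi_{\star_j}$ is pulled back from the rational tail $\Moduli_{0,\{\star_j\}\cup P_j}$. After multiplying out, the integral over the product factors as a product of integrals over the individual factors. On each rational factor $\Moduli_{0,\{\star_j\}\cup P_j}$, of dimension $|P_j|-2$, the only class present is a power of $\psi_{\star_j}$, and the standard genus-zero evaluation $\int_{\Moduli_{0,N}}\psi^{N-3}=1$ forces $m=|P_j|-2$ while contributing a factor of $1$; every other term in the series vanishes for dimension reasons, so exactly one monomial survives on each rational tail.

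For that surviving term the coefficient is $(-1)^{(|P_j|-2)+1}=(-1)^{|P_j|-1}$ and the residual power on the central factor is $\psi_{\bullet_j}^{\alpha_j-1-(|P_j|-2)}=\psi_{\bullet_j}^{\alpha_j-|P_j|+1}$; for singleton parts the decoration is already $\psi_{\bullet_j}^{\alpha_j}=\psi_{\bullet_j}^{\alpha_j-|P_j|+1}$, consistent with the same formula. The remaining integral over $\Moduli_{g,\{\bullet_1,\dots,\bullet_r\}}\cong\Moduli_{g,\ell(\calP)}$ is then exactly $\int_{\Moduli_{g,\ell(\calP)}}\prod_i\psi_{\bullet_i}^{\alpha_i-|P_i|+1}$, which sits in the correct degree $3g-3+r$ by the count $\sum_i(\alpha_i-|P_i|+1)=K-n+r=3g-3+r$. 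Collecting signs over all parts yields $\prod_j(-1)^{|P_j|-1}=(-1)^{\sum_j(|P_j|-1)}=(-1)^{n-\ell(\calP)}=(-1)^{n+\ell(\calP)}$, matching \eqref{omegafor2}. The main point to handle carefully is the geometric-series truncation together with the sign bookkeeping, treated uniformly for singleton and non-singleton parts; once the dimension constraint on each rational tail is seen to isolate a single term, the remainder is formal.
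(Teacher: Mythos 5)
Your proposal is correct and follows essentially the same route as the paper: integrate \eqref{omegafor}, use the dimension constraint on each rational tail $\Moduli_{0,\{\star_j\}\cup P_j}$ to isolate the single surviving monomial $\psi_{\star_j}^{|P_j|-2}$ (which evaluates to $1$), and collect the signs $(-1)^{|P_j|-1}$ into $(-1)^{n+\ell(\calP)}$. Your write-up is somewhat more explicit about the projection formula and the degree bookkeeping on the central factor, but the argument is the same.
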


\begin{proof}
This statement follows from formula \eqref{omegafor}, by noticing the following two facts:
\begin{itemize}
	\item For any partition $\calP$, by dimension reasons the only monomial that has nonzero evaluation on $[\Delta_\calP]$ is
	\begin{equation}\label{ha}
	\prod_{|P_i|=1} \psi_{\bullet_i}^{\alpha_i}\prod_{|P_i|>1}(-1)^{|P_i|-1}\psi_{\bullet_i}^{\alpha_i-|P_i|+1}\psi_{\star_i}^{|P_i|-2 }
	\end{equation}
	\item For any $n\geq 3$, $i\in [n]$, 
	$$
	\int_{\Moduli_{0,n}} \psi_i^{n-3}  = 1,
	$$
	hence all evaluations for the classes $\psi_{\star_i}$ in \eqref{ha} contribute a factor of one to the evaluation of the monomial on $[\Delta_\calP]$.
\end{itemize}
It follows that, for every $\calP$,
$$
\int_{[\Delta_\calP]} \prod_{|P_i|=1} \psi_{\bullet_i}^{\alpha_i}\prod_{|P_i|>1}(-1)^{|P_i|-1}\psi_{\bullet_i}^{\alpha_i-|P_i|+1}\psi_{\star_i}^{|P_i|-2 }=
(-1)^{n+\ell(\calP)} \int_{\Moduli_{g,\ell(\calP)}}\prod_{i=1}^{\ell(\calP)} \psi_{\bullet_i}^{\alpha_i-|P_i|+1}
.
$$
\end{proof}
There is a natural connection between the intersection of $\omega$ classes and $\kappa$ classes. While it is well known that the intersection theory of $\kappa$ classes on $\Moduli_{g}$ is equivalent to intersections of $\psi$ classes on all $\Moduli_{g,n}$ (See, e.g. \cite[Proposition 2.3.6]{k:pc}), the relationship becomes transparent when stated in terms of $\omega$ classes.

\begin{definition}
Let $g\geq 2$, and consider $\pi_1: \Moduli_{g,1} \to \Moduli_g$. For $j\geq 1$ we define
$$
\kappa_l := {\pi_1}_\ast(\psi_1^{l+1}) \in R^l(\Moduli_{g})
$$
\end{definition}

\begin{theorem} \label{thm-ka}
Let $g\geq 2$ and let $F: \Moduli_{g,n} \to \Moduli_g$ denote the total forgetful morphism, forgetting all marks. For any collection of non-negative integers $l_i$, we have 
 \begin{align}
\label{omegaforka}
F_\ast\left(\prod_{i=1}^n\omega_i^{l_i+1}\right) = \prod_{i=1}^n\kappa_{l_i}. 
\end{align}
\end{theorem}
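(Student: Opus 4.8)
The plan is to prove the factorization \eqref{omegaforka} by pushing the $\omega$-monomial forward to $\Moduli_g$ in stages, exploiting the fact that each $\omega_i$ is by definition pulled back from a single-marked space. First I would factor the total forgetful morphism as $F = \pi_1 \circ \rho$, where $\rho = \rho_{\{1\}} : \Moduli_{g,n} \to \Moduli_{g,1}$ remembers only the first mark and $\pi_1 : \Moduli_{g,1} \to \Moduli_g$ forgets it. More usefully, I would forget the marks one at a time, writing $F = \pi_{1} \circ \pi_2 \circ \cdots \circ \pi_n$ (after a harmless relabeling so that $\pi_j$ forgets the $j$-th surviving mark), so that the push-forward can be computed iteratively and reduced to the single-variable projection formula.

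The key computational input is a single-variable push-forward identity: for the forgetful map $\pi_{n} : \Moduli_{g,n} \to \Moduli_{g,n-1}$, one has $(\pi_n)_\ast(\omega_n^{l_n+1}) = \kappa_{l_n}$, interpreted as a class pulled back to (or living naturally on) $\Moduli_{g,n-1}$. The point is that $\omega_n = \rho_n^\ast \psi_n$ is the pull-back of the universal $\psi$ class from $\Moduli_{g,\{n\}}$, and under the identification of $\pi_n$ with the universal curve the fiber integral of $\psi_n^{l_n+1}$ is precisely the definition of $\kappa_{l_n}$. Crucially, because $\omega_n$ is a stable class (pulled back from the one-pointed space via the rememberful morphism), pushing it forward does not generate the rational-tail boundary corrections that would appear for genuine $\psi$ classes via Lemma \ref{lem-comp}; this is exactly why the relation is ``transparent'' in the $\omega$ formulation. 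The remaining factors $\prod_{i<n}\omega_i^{l_i+1}$ are pulled back from $\Moduli_{g,n-1}$ via $\pi_n$ (each $\omega_i$ for $i \neq n$ being insensitive to the presence of the $n$-th mark, again by the definition of $\omega$ as a rememberful pull-back), so the projection formula lets me extract them from the push-forward:
\begin{align*}
(\pi_n)_\ast\Big(\prod_{i=1}^{n}\omega_i^{l_i+1}\Big)
= \Big(\prod_{i=1}^{n-1}\omega_i^{l_i+1}\Big)\cdot(\pi_n)_\ast(\omega_n^{l_n+1})
= \Big(\prod_{i=1}^{n-1}\omega_i^{l_i+1}\Big)\cdot \kappa_{l_n},
\end{align*}
where on the right the $\kappa_{l_n}$ is now understood on $\Moduli_{g,n-1}$ (pulled back from $\Moduli_g$).

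I would then iterate this step: at each stage the product of the remaining $\omega$ powers is pushed forward, peeling off one $\kappa$ factor at a time, and after $n$ steps arriving at $\prod_{i=1}^n \kappa_{l_i}$ on $\Moduli_g$. The main obstacle, and the step requiring genuine care, is justifying the two compatibility claims that make the projection formula apply cleanly: (i) that $(\pi_n)_\ast(\omega_n^{l_n+1})$ equals $\kappa_{l_n}$ with no boundary correction terms, which rests on the stability of $\omega$ classes under forgetful pull-back (as opposed to the $\psi$ comparison of Lemma \ref{lem-comp}), and (ii) that $\omega_i$ for $i \neq n$ genuinely is $\pi_n^\ast(\omega_i)$, so that it may be pulled out of the push-forward. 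Both follow from Definition \ref{def-omega} together with the commutativity of the rememberful morphisms $\rho_i$ with the forgetful maps $\pi_n$, but this naturality should be spelled out explicitly, since it is precisely the feature distinguishing $\omega$ from $\psi$ and the reason the $\kappa$--$\psi$ correspondence factors so cleanly.
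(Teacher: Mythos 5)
Your proposal is correct and follows essentially the same route as the paper: peel off one mark at a time, identify ${\pi_i}_\ast(\omega_i^{l_i+1})$ with the pull-back of $\kappa_{l_i}$, and extract the remaining factors by the projection formula since each other $\omega_j$ is pulled back along $\pi_i$. The only point to sharpen is your item (i): mere commutativity of the square formed by $\rho_i$, $\pi_i$, $\Pi_i$ and $F$ does not by itself yield the push--pull identity ${\pi_i}_\ast\rho_i^\ast = F^\ast{\Pi_i}_\ast$; the paper justifies it by observing that $\Moduli_{g,n}$ maps birationally onto the fiber product $\Moduli_{g,n\smallsetminus\{i\}}\times_{\Moduli_g}\Moduli_{g,\{i\}}$, which is the precise form of the ``naturality'' you ask to have spelled out.
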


\begin{proof}
Consider the commutative diagram:
$$
\xymatrix{
\Moduli_{g,n} \ar[r]^{\rho_i} \ar[d]_{\pi_i}& \Moduli_{g,\{i\}} \ar[d]_{\Pi_i} \\
\Moduli_{g,n\smallsetminus\{i\}} \ar[r]^F & \Moduli_{g} \\
}
$$
For any Chow class $c$ in $\Moduli_{g, \{i\}}$ one can show, for instance by directly analyzing the definition of the four maps, or by appealing to the fact that $\Moduli_{g,n}$ maps birationally onto the fiber product $\Moduli_{g,n\smallsetminus \{i\}} \times_{\Moduli_{g,n}} \Moduli_{g,\{i\}}$, that
\begin{equation}
{\pi_i}_\ast \rho_i^\ast (c) =  F^\ast {\Pi_i}_\ast (c).
\end{equation}
Picking $c = \psi_i^{l+1}$, we have
\begin{equation}
{\pi_i}_\ast (\omega_i^{l+1}) =  F^\ast (\kappa_l).
\end{equation} 
For $i = 2, \ldots, n$, the class $\omega_i$ is pulled back via $\pi_1$. By projection formula:
\begin{equation}
{\pi_1}_\ast \left(\omega_1^{l_1+1} \prod_{i=2}^n \omega_i^{l_i+1} \right) =  F^\ast (\kappa_l) \prod_{i=2}^n \omega_i^{l_i+1}.
\end{equation} 
Pushing-forward via all other forgetful morphisms, and applying projection formula at each step, one obtains the result.
\end{proof}

Combining the results of Theorems \ref{num} and \ref{thm-ka}, one immediately obtains the following combinatorial formula relating $\kappa$ and $\psi$ top intersections.

\begin{corollary}
Let $g\geq 2$, and for $1\leq i \leq n$ let $l_i$ be a non-negative integer, with $ \sum_{i=1}^n l_i = 3g-3$. For any partition $\calP  = \{P_1,\dots, P_r \}
\vdash [n]$, define $\beta_j := \sum_{i\in P_j} l_i$.
Then:
\begin{align}
\label{omegaforka2}
 \int_{\Moduli_{g}}\prod_{i=1}^n\kappa_{l_i} =  \sum_{\calP \,\vdash [n]}(-1)^{n+\ell(\calP)} \int_{\Moduli_{g,\ell(\calP)}}\prod_{j=1}^{\ell(\calP)} \psi_{\bullet_j}^{\beta_j+1}.
\end{align}
\end{corollary}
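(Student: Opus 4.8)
The plan is to chain together the two preceding theorems after a single change of variables. First I would apply Theorem \ref{thm-ka} to the exponents $l_i$, which gives $\prod_{i=1}^n \kappa_{l_i} = F_\ast\bigl(\prod_{i=1}^n \omega_i^{l_i+1}\bigr)$ as a class on $\Moduli_g$. Integrating both sides over $\Moduli_g$ and using that a proper pushforward preserves the degree of a zero-cycle, the right-hand side becomes $\int_{\Moduli_{g,n}} \prod_{i=1}^n \omega_i^{l_i+1}$. Here I would pause to check the dimension bookkeeping: since $\sum_i (l_i+1) = (3g-3)+n = \dim \Moduli_{g,n}$, this is a legitimate top intersection, so no lower-dimensional pieces are lost and the equality of the two integrals is exact.

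Next I would feed this top intersection into Theorem \ref{num} with the substitution $k_i = l_i + 1$. The hypothesis $\sum_i k_i = 3g-3+n$ of that theorem holds precisely because of the dimension count just verified. Under this substitution the partition weights transform as $\alpha_j = \sum_{i \in P_j} k_i = \sum_{i \in P_j}(l_i+1) = \beta_j + |P_j|$, and hence the exponent appearing in \eqref{omegafor2} simplifies to $\alpha_j - |P_j| + 1 = \beta_j + 1$. Substituting this into the right-hand side of \eqref{omegafor2} yields exactly the sum over partitions appearing on the right-hand side of \eqref{omegaforka2}.

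Since every step is an identity, stringing them together produces the claimed formula with no further work. There is no substantive obstacle: the content is entirely carried by Theorems \ref{num} and \ref{thm-ka}. The only points requiring care are the index shift $k_i = l_i+1$ together with the resulting simplification $\alpha_j - |P_j| + 1 = \beta_j + 1$, and confirming that the dimension hypothesis of Theorem \ref{num} is met so that the passage from $\int_{\Moduli_g} F_\ast(-)$ to $\int_{\Moduli_{g,n}}(-)$ is valid.
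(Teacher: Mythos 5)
Your proposal is correct and follows exactly the route the paper intends: combine Theorem \ref{thm-ka} with Theorem \ref{num} via the substitution $k_i = l_i+1$, which gives $\alpha_j - |P_j| + 1 = \beta_j + 1$. The paper leaves these details implicit, so your explicit verification of the dimension count and the index shift is a faithful expansion of the same argument.
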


\bibliographystyle{siam}
\bibliography{biblio}

\end{document}